\documentclass[11pt,a4paper,reqno]{amsart}
\usepackage[english]{babel}
\usepackage[T1]{fontenc}
\usepackage{verbatim}
\usepackage[shortlabels]{enumitem}
\usepackage{palatino}
\usepackage{amsmath}
\usepackage{mathabx}
\usepackage{amssymb}
\usepackage{amsthm}
\usepackage{amsfonts}
\usepackage{graphicx}
\usepackage{esint}
\usepackage{color}
\usepackage{mathtools}
\usepackage{overpic}

\usepackage[colorlinks = true, citecolor = black]{hyperref}
\author{Tuomas Orponen}
\title{Planar sets with large visible parts}
\address{Department of Mathematics and Statistics\\ University of Jyv\"askyl\"a,
P.O. Box 35 (MaD)\\
FI-40014 University of Jyv\"askyl\"a\\
Finland}
\email{tuomas.t.orponen@jyu.fi}
\date{\today}
\subjclass[2010]{28A80 (primary) 28A78 (secondary)}
\keywords{Visibility problem, Hausdorff dimension}
\thanks{T.O. is supported by the European Research Council (ERC) under the European Union’s Horizon Europe research and innovation programme (grant agreement No 101087499)}

\newcommand{\R}{\mathbb{R}}

\newcommand{\N}{\mathbb{N}}

\newcommand{\Z}{\mathbb{Z}}

\newcommand{\Hd}{\dim_{\mathrm{H}}}

\newcommand{\diam}{\operatorname{diam}}
\newcommand{\card}{\operatorname{card}}
\newcommand{\dist}{\operatorname{dist}}

\newcommand{\vis}{\mathrm{Vis}}

\def\Barint_#1{\mathchoice
          {\mathop{\vrule width 6pt height 3 pt depth -2.5pt
                  \kern -8pt \intop}\nolimits_{#1}}%
          {\mathop{\vrule width 5pt height 3 pt depth -2.6pt
                  \kern -6pt \intop}\nolimits_{#1}}%
          {\mathop{\vrule width 5pt height 3 pt depth -2.6pt
                  \kern -6pt \intop}\nolimits_{#1}}%
          {\mathop{\vrule width 5pt height 3 pt depth -2.6pt
                  \kern -6pt \intop}\nolimits_{#1}}}

\numberwithin{equation}{section}

\theoremstyle{plain}
\newtheorem{thm}[equation]{Theorem}
\newtheorem{"thm"}[equation]{"Theorem"}

\newtheorem{lemma}[equation]{Lemma}
\newtheorem{"lemma"}[equation]{"Lemma"}

\newtheorem{"proposition"}[equation]{"Proposition"}
\newtheorem{proposition}[equation]{Proposition}

\newtheorem{problem}{Problem}

\theoremstyle{definition}

\newtheorem{definition}[equation]{Definition}

\newtheorem{notation}[equation]{Notation}

\theoremstyle{remark}
\newtheorem{remark}[equation]{Remark}

\newcommand{\nref}[1]{(\hyperref[#1]{#1})}

\DeclareMathSymbol{\intop}  {\mathop}{mathx}{"B3}

\begin{document}

\begin{abstract} I construct a compact subset of the plane whose visible parts are $\tfrac{3}{2}$-dimensional in all directions. This disproves the visibility conjecture. The value $\tfrac{3}{2}$ cannot be increased, as shown in recent collaboration with A. Rutar.  \end{abstract}

\maketitle

\tableofcontents

\section{Introduction}

This paper studies the visible parts of compact sets in the plane. We start by defining relevant notation and terminology.
\begin{notation}\label{not1}
    For $z \in \R^{2}$ and $\sigma \in [0,1)$, let 
    \begin{displaymath} \ell^{+}_{z,\sigma} \coloneqq \{z + (r,\sigma r) : r \geq 0\} \quad \text{ and } \quad \ell_{z,\sigma} \coloneqq \{z + (r,\sigma r) : r \in \R\}. \end{displaymath}
    These are the \emph{ray} and \emph{line with slope $\sigma$ passing through $z$}.
\end{notation}

\begin{definition}[$\vis_{\sigma}(K)$ and $\vis^{2}_{\sigma}(K)$]\label{def:visiblePart}
    Let $K \subset \R^{2}$ be a set, and $\sigma \in [0,1)$.
    The \emph{visible part of $K$ in direction $\sigma$} is the set
    \begin{equation*}
        \vis_{\sigma}(K) \coloneqq \{z \in K : K \cap \ell^{+}_{z,\sigma} = \{z\}\}.
    \end{equation*}
    The \emph{bi-visible part of $K$ in direction $\sigma$} is the set
    \begin{displaymath} \vis_{\sigma}^{2}(K) := \{z \in K : K \cap \ell_{z,\sigma} = \{z\}\}. \end{displaymath}
\end{definition}
\begin{remark} Evidently $\vis^{2}_{\sigma}(K) \subset \vis_{\sigma}(K)$ for all $\sigma \in [0,1)$. This inclusion can be strict for all $\sigma \in [0,1)$. For example $\vis_{\sigma}(S^{1})$ is an arc for all $\sigma \in [0,1)$, whereas $\card \vis_{\sigma}^{2}(S^{1}) \equiv 2$.
\end{remark} 

The \emph{visibility problem} in fractal geometry is the quest for finding upper bounds on the Hausdorff dimension of $\vis_{\sigma}(K)$, typically when $K$ is compact. The first positive results were published by J\"arvenp\"a\"a, J\"arvenp\"a\"a, MacManus, and O'Neil \cite{zbl:1026.28002} in 2003. They showed that if $K \subset \R^{2}$ is (i) a quasi-circle or (ii) a connected self-similar set without rotations, then $\Hd \vis_{\sigma}(K) \leq 1$ for all $\sigma \in [0,1)$. They also showed that the visible parts of graphs $\Gamma \subset \R^{2}$ of continuous functions $[0,1] \to [0,1]$ are at most $1$-dimensional in all directions, except possibly one: the (bi-)visible part of $\Gamma$ in the vertical direction is evidently $\Hd \Gamma$-dimensional, and $\Hd \Gamma$ can take any value in $[1,2]$.

While no "visibility conjecture" was formulated in \cite{zbl:1026.28002}, it soon turned into common belief that if $K \subset \R^{2}$ is compact, then $\Hd \vis_{\sigma}(K) \leq 1$ for almost all $\sigma \in [0,1)$. This has been explicitly suggested (at least) in \cite[Problem~11]{zbl:1049.28007}, \cite[Conjecture~1.3]{zbl:1267.28006}, \cite[Section 10]{MR3558147}, and \cite[Section~7]{arxiv:2602.22002}. The hypothesis has been confirmed for certain special families of sets such as quasi-circles, fractal percolation, and some self-similar and self-affine sets (see \cite{zbl:1251.28007,zbl:1267.28006,zbl:1026.28002,zbmath:7541839,zbl:1479.28008}).
There is also a strong partial result for planar continua due to O'Neil \cite{zbl:1152.28318}.
In \cite{zbl:1119.28003}, it is shown that if $s \in (1,2]$, and $K \subset \R^{2}$ is a compact set with $\mathcal{H}^{s}(K) < \infty$, then $\mathcal{H}^{s}(\vis_{\sigma}(K)) = 0$ for a.e.
$\sigma \in [0,1)$.

For general compact sets, the first partial result appeared in \cite{zbl:1561.28083}, and this first a.e. upper bound ($1.99$) was subsequently lowered by Matheus and D{\k a}browski \cite{zbmath:8092718,zbl:1484.28010}. In 2026, we improved the bound further with A. Rutar \cite{2026arXiv260606965O} by establishing the following:
\begin{thm}\label{thm:OR} Let $K \subset \R^{2}$ be compact. Then $\Hd \vis_{\sigma}(K) \leq \tfrac{3}{2}$ for a.e. $\sigma \in [0,1)$. \end{thm}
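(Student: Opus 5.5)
We argue by contradiction. Suppose there is $s > \tfrac{3}{2}$ and a set $\Sigma \subset [0,1)$ of positive Lebesgue measure such that $\Hd \vis_{\sigma}(K) > s$ for all $\sigma \in \Sigma$. The first step is to pass to measures. For each $\sigma \in \Sigma$, Frostman's lemma applied to the Borel set $\vis_{\sigma}(K)$ gives a measure $\mu_{\sigma}$ supported on $\vis_{\sigma}(K)$ with $\mu_\sigma(B(x,r)) \le r^{s}$. By a standard measurable selection and pigeonholing, we may assume the family $\{\mu_\sigma\}$ is measurable in $\sigma$ and uniformly normalised, $\mu_\sigma(K) \ge c > 0$. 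We then discretise at scale $\delta$. This yields, for each $\sigma$ in a set of directions of measure $\gtrsim 1$, a $(\delta,s)$-set $P_\sigma$ of $\delta$-squares meeting $K$. Each of these squares is ``visible'' in direction $\sigma$ at scale $\delta$: the $\delta$-tube of the ray $\ell^+_{z,\sigma}$ starting just beyond the square contains no other square of $K_\delta$ at distance $\gtrsim \delta$.

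The key geometric observation concerns the incidence structure. Let $p \in P_\sigma$ and $p' \in P_{\sigma'}$. If $p'$ lies in the forward ray-tube from $p$ in direction $\sigma$, then $p'$ is not in $K_\delta$, which contradicts $p' \in K$. So for each pair of directions, visible squares in direction $\sigma$ ``shade'' a cone-like region of $K$. Counting the configurations in which a square of $P_{\sigma'}$ would lie on a visible ray from $P_\sigma$ gives a forbidden incidence condition. Summed over many directions, this says that the set $\bigcup_\sigma P_\sigma \subset K_\delta$ meets few of the forward ray-tubes emanating from it. On the other hand, each $P_\sigma$ has cardinality $\gtrsim \delta^{-s}$, so the union of the $P_\sigma$ over $\gtrsim \delta^{-1}$ separated directions is large. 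The plan is to compare two quantities:
\begin{itemize}
\item a lower bound on incidences between the squares of $P = \bigcup_\sigma P_\sigma$ and the lines $\ell_{z,\sigma}$, which is forced because $K \supset P$ and every line through a point of $P_\sigma$ in direction $\sigma$ typically meets $K$ again behind it;
\item an upper bound coming from a Furstenberg/Fu--Ren type incidence estimate for $\delta$-tubes and $(\delta,s)$-sets, which in the plane yields the exponent threshold $\tfrac{3}{2}$.
\end{itemize}
The natural candidate for the upper bound is the sharp Furstenberg set estimate (Ren--Wang), or equivalently a Kaufman-type projection bound.

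The step I expect to be the main obstacle is extracting the quantitative lower bound. Visibility only says that the forward ray is empty, and this gives no positive incidence information by itself. To get around this, I would use the back half of each line. Projecting $K$ along direction $\sigma$ via $\pi_\sigma$, the visible part is a graph-like selection of the ``front'' point on each fibre, so $\Hd \vis_\sigma(K) \le \Hd \pi_\sigma(\vis_\sigma K) + \text{(fibre dimension)}$. Since the fibre dimension is $0$ for visible parts in a single direction, the whole issue is that this count is lost at positive scale. I would therefore first try to prove a discretised statement of the following form: for a.e. $\sigma$, the number of $\delta$-tubes in direction $\sigma$ that contain $\gtrsim \delta^{-t}$ squares of $P_\sigma$, each of which is still visible, is controlled by comparing with a transversal direction $\sigma'$. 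I expect the exponent $\tfrac{3}{2}$ to come from optimising $t$ between this tube count and the $(\delta,s)$-Frostman bound via a Cauchy--Schwarz $L^2$ projection argument of Kaufman type, where $s - \tfrac{1}{2}$ appears naturally. Obtaining exactly $\tfrac{3}{2}$, rather than a weaker exponent, is where I anticipate the real difficulty.
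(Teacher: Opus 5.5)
The paper does not prove this statement. It is quoted from the author's joint work with Rutar, and the paper itself only shows that $\tfrac{3}{2}$ is sharp (Theorem \ref{main}). Judged on its own terms, your outline has genuine gaps, and Theorem \ref{main} actually refutes some of its intermediate claims.

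\textbf{The discretisation step.} Visibility of $z$ only says that the ray $\ell^{+}_{z,\sigma}$ itself misses $K \setminus \{z\}$. The set $K$ may approach the ray at any rate: for $K = \{(x,x^{2}) : |x| \leq 1\}$ and $\sigma = 0$, the origin is visible, yet the $\delta$-tube around the ray meets $K$ up to distance $\sim \sqrt{\delta}$.

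Worse, suppose your property held at a point $z$ for all small $\delta$, with constant $C$ in ``distance $\gtrsim \delta$''. Then near $z$ the set $K$ would avoid a forward cone $z + \Gamma$ of aperture $\sim 1/C$ around $\ell^{+}_{z,\sigma}$. Two such points $z,z'$ that are close to each other then satisfy $z' - z \notin \Gamma \cup (-\Gamma)$. So these points lie on countably many Lipschitz graphs and form a set of dimension at most $1$. Since visible parts can be $\tfrac{3}{2}$-dimensional in every direction, your ``$\delta$-visible squares'' discard exactly the part of $\vis_{\sigma}(K)$ that matters.

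The paper's construction illustrates this. At scale $\delta_{k}$ one is only guaranteed that $[\ell_{z,\sigma}]_{\delta_{k}/2} \cap K_{k}$ has diameter $\lesssim \delta_{k-2} \gg \delta_{k}$; visibility emerges only in the limit, across widely separated scales. So any proof must be genuinely multi-scale, and the single-scale ``forbidden incidence'' you build on is not available.

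\textbf{The incidence lower bound.} Nothing forces the line through a visible point to meet $K$ behind it. Since $\vis^{2}_{\sigma}(K) \subset \vis_{\sigma}(K)$, the theorem in particular bounds $\Hd \vis^{2}_{\sigma}(K)$. For bi-visible points there are no such incidences at all. Hence an argument that needs them cannot exclude a hypothetical compact set with $\Hd \vis^{2}_{\sigma}(K) > \tfrac{3}{2}$ for a positive-measure set of $\sigma$. Theorem \ref{main} shows this scenario is not degenerate: bi-visible parts alone can be $\tfrac{3}{2}$-dimensional in every direction.

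\textbf{The fibration inequality.} The bound $\Hd \vis_{\sigma}(K) \leq \Hd \pi_{\sigma}(\vis_{\sigma}(K)) + (\text{fibre dimension})$ is false. The projection $(x,y) \mapsto y - \sigma x$ along lines of slope $\sigma$ (your $\pi_{\sigma}$) is injective on $\vis_{\sigma}(K)$, since of two visible points on one such line, the rear one would be blocked by the front one. So the inequality would give $\Hd \vis_{\sigma}(K) \leq 1$ for every $\sigma$. This contradicts Theorem \ref{main}, and also the graph examples in the introduction.

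\textbf{The missing core.} The decisive step is left open, as you acknowledge yourself: which configuration of tubes and squares the visible parts generate, with which non-concentration parameters, and how a Ren--Wang type bound then yields exactly $\tfrac{3}{2}$. So what you have is a list of plausible ingredients rather than a proof.

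The information a proof can actually use is the following:
\begin{itemize}
\item the projection along slope-$\sigma$ lines is injective on $\vis_{\sigma}(K)$;
\item for $\sigma' \neq \sigma$, the set $\vis_{\sigma'}(K) \subset K$ must avoid every open forward ray from $\vis_{\sigma}(K)$.
\end{itemize}
These ``negative'' constraints between different directions have to be turned into a contradiction across many scales. The paper's extremal example is the point-line dual of a configuration that is extremal for the exceptional-projection (Furstenberg) estimates. That is consistent with your instinct that a sharp Ren--Wang type estimate should enter, but it has to enter in such a dual, multi-scale form.
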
 

In this paper I prove that Theorem \ref{thm:OR} is sharp:
\begin{thm}\label{main} There exists a compact set $K \subset \R^{2}$ such that $\Hd \vis^{2}_{\sigma}(K) \geq \tfrac{3}{2}$ for all $\sigma \in [0,1)$. In particular $\Hd \vis_{\sigma}(K) \geq \tfrac{3}{2}$ for all $\sigma \in [0,1)$. \end{thm}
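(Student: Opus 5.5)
The plan is to build $K$ as a union $K = \bigcup_{j} K_{j} \cup \{\text{limit points}\}$ of pieces $K_j$ placed at scales and locations tending to zero fast, where each piece $K_j$ handles a finite $\delta_j$-net of slopes $\sigma$. Since $\vis^{2}_{\sigma}$ is not monotone in $K$, the pieces must be separated so that they do not block each other. The idea is to take the pieces so small and so far apart, relative to their own size, that a line meeting one piece in a single point meets no other piece. Equivalently, the bi-visibility requirement for $\sigma$ only has to be checked for lines hitting a single piece.

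The basic building block is a product-type Cantor construction adapted to a single slope. Parametrise by coordinates $(t,u)$, where $t$ is the position along the line direction $(1,\sigma)$ and $u$ is the transversal coordinate. Take a set $A \times B$, with $A$ a Cantor set in the $u$-direction of dimension $1$ and $B$ a Cantor set of dimension $\tfrac12$ in the $t$-direction. Then shear or modify it so that each line of slope $\sigma$ meets the set in at most one point, while $\Hd$ of the set is still $\tfrac32$.

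The target for one slope is a $\tfrac32$-dimensional set $E$ such that the map $z\mapsto u(z)$ is injective on $E$. This is impossible with dimension $>1$ for an injective Lipschitz projection, but $E$ is only required to be \emph{bi-visible for slope $\sigma$}, which is exactly injectivity of the projection $\pi_\sigma$ restricted to $E$. Injectivity loses nothing for Hausdorff dimension: a graph of a function $f\colon C \to \R$ over a Cantor set $C$ can have dimension $\tfrac32$ when $f$ is rough (Hölder-$\tfrac12$-type). So each building block will be a graph $\{u + f(u)\,(1,\sigma) : u \in C\}$ of a $\tfrac12$-Hölder-type random or Weierstrass-type function over a full-dimensional Cantor set, with dimension $\tfrac32$ by the standard energy estimate.

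The difficulty is uniformity over \emph{all} $\sigma\in[0,1)$, not just a net. A graph over the $\sigma$-direction is bi-visible only for that exact slope; for a nearby slope $\sigma'$ the lines of slope $\sigma'$ cross the rough graph many times. The key step is therefore a multiscale design. At scale $2^{-n}$ the block for slope $\sigma$ should look like a union of $2^{-n}$-tubes in direction $\sigma$ whose transversal positions are distinct. Then for any $\sigma'$ with $|\sigma-\sigma'|\lesssim 2^{-n}$, a $\sigma'$-line still meets only one tube at that scale. I will use an iterated construction in which, at stage $n$, the slope range is refined dyadically. Inside each tube of stage $n$ built for slope interval $I$, I place $\sim 2^{n/2}$ subtubes (this gives the $\tfrac32$ count: $2^{n}$ transversal by $2^{n/2}$ longitudinal) for each child interval of $I$, in disjoint transversal positions. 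Then for every $\sigma$ the nested sequence of tubes containing $\sigma$'s branch gives a Cantor set of dimension $\tfrac32$ on which $\pi_\sigma$ is injective, with no other part of $K$ blocking.

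I expect the main obstacle to be the counting. Making room for all child slope intervals transversally, without collisions and without the other branches' mass blocking $\sigma$-lines, has to cost only a subexponential factor, so that the dimension stays $\tfrac32$. This is where Theorem \ref{thm:OR} says the budget is exactly tight. I would first try letting the scales grow superexponentially, so that the branching in slopes at each stage is negligible in the dimension computation.
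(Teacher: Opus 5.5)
There is a genuine gap. Your overall architecture matches Section~\ref{s3} of the paper: you refine slope intervals dyadically along very rapidly decreasing scales, and you take $K(\sigma)$ as the intersection of the pieces along $\sigma$'s branch. But the clause ``with no other part of $K$ blocking'' is the whole content of the theorem, and nothing in your scheme produces it.

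Placing the subtubes of sibling intervals $I',I''$ in disjoint transversal positions only separates nearly parallel objects. A line of slope $\sigma\in I''$ is not parallel to the $I'$-subtubes. Along a parent tube of width $2^{-n}$ it drifts transversally by about $|I|=2^{-n}$, i.e.\ across the whole parent tube, so it crosses the $I'$-subtubes. For slope intervals far from $\sigma$ it is worse. Those branches must also be $\tfrac32$-dimensional, so they are spread over the same region as the $\sigma$-branch, and their tubes cross the $\sigma$-tubes at a definite angle.

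What is actually needed is this: at every scale, and simultaneously for all $\sigma$, the future pieces of every other branch must lie off the $\sigma$-lines through the $\sigma$-branch, while every branch keeps $\tfrac32$-dimensional content. That is a point--line incidence-avoidance problem, and counting or superexponential scales do not settle it. Your first paragraph does not reduce the problem either: among countably many separated pieces, one piece must already handle a continuum of slopes. The single-slope rough graph is also beside the point, since one fixed direction is trivial.

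The paper supplies the missing ingredient in Proposition~\ref{prop1}, the basic building block. It gives a family $\mathfrak{P}\subset\mathcal{D}_\delta$ and, for each $\sigma\in\delta\Z\cap[0,1)$, about $\delta^{-1/2}$ well-separated $\delta$-tubes of slope $\sigma$, each carrying one $\delta\times\Delta$ rectangle. The enlarged tubes avoid $\cup\mathfrak{P}$. Yet both $\mathfrak{P}$ and the rectangles satisfy $\mathcal{H}^{t-\epsilon}_\infty(\mathcal{Q}\cap\,\cdot\,)\ge\mathcal{H}^t_\infty(\mathcal{Q})$ for \emph{every} $\mathcal{Q}\subset\mathcal{D}_\Delta$.

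This comes from point--line duality applied to a $\tfrac32$-dimensional grid (Lemma~\ref{lemma1}). The grid's projections $\pi_\zeta$ are smaller than $\delta^{-1}$ by a power, for a roughly $\tfrac12$-dimensional set $\Sigma$ of directions. The dual tubes therefore cover only a small part of each segment $\{\zeta\}\times[0,1]$ with $\zeta\in\Sigma$, and $\mathfrak{P}$ is the rest.

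Note that the balance is the opposite of your ``$2^n$ transversal by $2^{n/2}$ longitudinal''. At each step the paper's $\sigma$-pieces are $\tfrac12$-dimensional transversally and $1$-dimensional longitudinally. Their $\sigma$-shadow is therefore small and leaves room for everything else.

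Iterating this block over all slope intervals (Lemma~\ref{prop2}, Theorem~\ref{thm1}) gives $\theta$-disconnectedness between non-sibling branches. Siblings are not separated at all. They are controlled by $\diam_\theta(\mathcal{P}_\theta)\le\Delta$, which forces a $\sigma$-line through $z\in K(\sigma)$ to meet $K_k$ in a set of diameter at most $\delta_{k-2}\to0$. Because the content inequalities hold uniformly in $\mathcal{Q}$, the $\epsilon$-losses can be summed over the iterations.
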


\begin{remark} The main point in Theorem \ref{main} lies in the corollary about $\vis_{\sigma}(K)$. The result for $\vis_{\sigma}^{2}(K)$ however had no extra cost, and it sheds light on a problem proposed in \cite[Section 6.4]{MR3617376}: for Lebesgue typical $\sigma \in [0,1)$, how much of a compact set $K \subset \R^{2}$ can be covered by "$s$-light" lines $\ell \subset \R^{2}$ with slope $\sigma$, satisfying $\Hd (K \cap \ell) \leq s$, where $s < \Hd K - 1$? Theorem \ref{main} shows that a $\tfrac{3}{2}$-dimensional set can be covered by such "$s$-light" lines, for every $\sigma \in [0,1)$, and every $s > 0$.  \end{remark} 

\begin{remark} Often visible parts are indexed by directions $e \in S^{1}$ instead of $\sigma \in [0,1)$. I thank E. J\"arvenp\"a\"a for pointing out that Theorem \ref{main} implies the existence of a compact set $K \subset \R^{2}$ whose (bi-)visible parts are $\tfrac{3}{2}$-dimensional for all $e \in S^{1}$. In fact, a union of four rotated and suitably translated copies of the set $K_{0}$ in Theorem \ref{main} has the desired property ("$4$" comes from the fact that the slopes in $[0,1)$ correspond to directions $e \in S^{1}$ making an angle in $[0,\pi/4)$ with the positive $x$-axis.) Precisely, let $R_{\varphi}$ be a the rotation by angle $\varphi \in [0,\pi)$. Then, assuming $K_{0} \subset B(0,\tfrac{1}{4})$ (as we may by dilating), a set of the form
\begin{displaymath} K := (K_{0} + w_{0}) \cup (R_{\pi/4}(K_{0}) + w_{1}) \cup (R_{\pi/2}(K_{0}) + w_{2}) + (R_{3\pi/4}(K_{0}) + w_{3}) \end{displaymath}
has the desired property. There is plenty of freedom in choosing the translation vectors $w_{0},\ldots,w_{3}$. One only needs to ensure that lines passing through $B(w_{j},\tfrac{1}{4})$ with angle in $[j\pi/4,(j + 1)\pi/4)$ do not intersect the discs $B(w_{i},\tfrac{1}{4})$ with $i \in \{0,1,2,3\} \, \setminus \, \{j\}$. One concrete choice is given by $(w_{0},w_{1},w_{2},w_{3}) = ((0,0),(-1,2),(-2,0),(2,3))$.  \end{remark}

\begin{remark} There is a heuristic way to justify why "$\tfrac{3}{2}$" is the right answer to the visibility problem. Upon any close inspection this heuristic cannot stand daylight, so I only add it here for entertainment. Fix $s \in [1,2]$, and pick a compact set $K \subset \R^{2}$ with $\Hd K = s$. Up to rotation, the visible parts $\vis_{\sigma}(K)$ are graphs of functions $f_{\sigma} \colon [0,1] \to [0,1]$. If it happened that each $f_{\sigma}$ is $\alpha$-H\"older continuous for some $\alpha \in [0,1]$, what might be the largest possible $\alpha$ in terms of $s$? After a moment of thought, the value $\alpha = s - 1$ may start to seem plausible (think of the cases $s \in \{1,2\}$). Assuming that this is correct, let us ask: what is the maximal Hausdorff dimension of the graph of an $(s - 1)$-H\"older function? The answer is $3 - s$, see \cite[Section, Theorem 6]{MR833073}. Therefore the visible parts of $K$ might conceivably be $(3 - s)$-dimensional. On the other hand, $\Hd \vis_{\sigma}(K) \leq \Hd K = s$. With this in mind, it might be possible to construct an $s$-dimensional compact set $K \subset \R^{2}$ with $\Hd \vis_{\sigma}(K) = \min\{s,3 - s\}$ for all $\sigma \in [0,1)$. Now note that $\min\{s,3 - s\} \leq \tfrac{3}{2}$ for all $s \in [1,2]$, with equality exactly if $s = \tfrac{3}{2}$.

The above is just a heuristic. In particular, I do not know if the set $K$ in Theorem \ref{main} is (or can be constructed to be) $\tfrac{3}{2}$-dimensional. This seems plausible, but would (at least) require significant additional work. \end{remark} 

While Theorems \ref{thm:OR} and \ref{main} resolve the visibility problem as commonly stated in the literature, the problem remains open in the connected and Ahlfors regular cases:

\begin{problem}\label{problem2} Assume that $K \subset \R^{2}$ is compact and (a) connected, or (b) Ahlfors $s$-regular with $s \in [0,2]$. Is it true that $\Hd \vis_{\sigma}(K) \leq \min\{1,s\}$ for a.e. $\sigma \in [1,2]$? \end{problem}

\begin{remark} The set constructed in Theorem \ref{main} is far from Ahlfors regular (let alone connected). There is partial progress towards Problem \ref{problem2}: D{\k a}browski has proved that if $K \subset \R^{2}$ is compact and Ahlfors $s$-regular with $s \in [1,2]$, then $\Hd \vis_{\sigma}(K) \leq s - a(s - 1)$ for a.e.
$\sigma \in [-1,1]$, where $a > 0.183$ is absolute. In particular, a.e. visible parts of Ahlfors $\tfrac{3}{2}$-regular sets are strictly less than $\tfrac{3}{2}$-dimensional.  

Remarkably, Problem \ref{problem2} is open for general self-similar sets, even without rotations. \end{remark}

The construction in Theorem \ref{main} also leaves open the following "endpoint" problem:

\begin{problem} Does there exist a compact set $K \subset \R^{2}$ such that $\mathcal{H}^{3/2}(\vis_{\sigma}(K)) > 0$ for all $\sigma \in [0,1)$? By the results in \cite{zbl:1119.28003}, this is only conceivable if $\mathcal{H}^{3/2}(K) = \infty$.  \end{problem}

The visibility problem in higher dimensions remains open. Many of the partial results cited above Theorem \ref{thm:OR} are already stated (in possibly weaker form) in all dimensions.

\subsection{Paper outline} Section \ref{s2} contains $\delta$-discretised statements (the main result there is Theorem \ref{thm1}) which are used to prove Theorem \ref{main} in Section \ref{s3}. Appendix \ref{appA} contains the construction of a "basic building block" behind the construction; the details of this object are carefully explained in Section \ref{s2}.

\subsection*{Notation} We use the convention that $\mathbb{N} = \{0,1,2,\ldots\}$. For $\delta \in 2^{-\N}$ and $A \subset \R^{d}$, the notation $\mathcal{D}_{\delta}(A)$ refers to the (standard) dyadic $\delta$-cubes of $\R^{d}$ which intersect $A$. We also write $\mathcal{D} := \bigcup_{\delta \in 2^{-\N}} \mathcal{D}_{\delta}(\R^{2})$ (note that we only include in $\mathcal{D}$ dyadic cubes of side-length $\leq 1$). The side-length of $Q \in \mathcal{D}$ is denoted $\ell(Q)$. We abbreviate $\mathcal{D}_{\delta} := \mathcal{D}_{\delta}([0,1)^{2})$. 

The notation $\mathcal{H}^{t}_{\infty}$ will refer to the \emph{dyadic $t$-dimensional Hausdorff content}:
\begin{equation}\label{def:content} \mathcal{H}^{t}_{\infty}(K) = \inf \left\{ \sum_{i} \ell(Q_{i})^{t} : K \subset \bigcup_{i} Q_{i} \right\}, \qquad K \subset \R^{d}, \end{equation}
where the "$\inf$" runs over all families $\{Q_{i}\} \subset \mathcal{D}$. We abbreviate $\mathcal{H}^{t}_{\infty}(\mathcal{P}) := \mathcal{H}^{t}_{\infty}(\cup \mathcal{P})$ for $\mathcal{P} \subset \mathcal{D}$. If $\mathcal{P}$ consists of dyadic cubes of side-length $\geq \delta \in 2^{-\N}$, and $t \in [0,d]$, it is easy to check (and will be implicitly used) that the optimal cover in the definition of $\mathcal{H}^{t}_{\infty}(\mathcal{P})$ also consists of dyadic cubes side-length $\geq \delta$. This boils down to the fact that if $\{\mathcal{Q}_{j}\} \subset \mathcal{D}$ is an arbitrary cover of $Q \in \mathcal{D}_{\delta}$ by dyadic cubes, then $\sum_{j} \ell(Q_{j})^{t} \geq \ell(Q)^{t}$.

We will also use the following notational convention: if $\mathcal{Q} \subset \mathcal{D}_{\Delta}$ and $\mathcal{P} \subset \mathcal{D}_{\delta}$ with $\delta \leq \Delta$, we will write 
\begin{displaymath} \mathcal{Q} \cap \mathcal{P} := \{p \in \mathcal{P} : p \subset Q \text{ for some } Q \in \mathcal{Q}\}. \end{displaymath} 
For $\delta > 0$, a \emph{$\delta$-tube} refers to the closed $(\delta/2)$-neighbourhood of a line in $\R^{2}$.

The \emph{slope} of a non-vertical line $\ell := \{(x,y) : y = ax + b\} \subset \R^{2}$ is $\sigma(\ell) := a$.

\section{Discretised results}\label{s2}

In this section we perform a number of $\delta$-discretised constructions to help us find the compact set $K$ in Theorem \ref{main} (the final construction can be found in Section \ref{s3}). A key component is the "basic building block" (BBB) in Proposition \ref{prop1}. Informally speaking, the BBB is a compact set $K \subset [0,1]^{2}$ with $\Hd K = \tfrac{3}{2}$ and the following property: for every slope $\sigma \in [0,1)$, there exists a $\tfrac{1}{2}$-dimensional family of lines with slope $\sigma$ which "pass near $K$ but do not touch $K$". For more precision on this mysterious property, see Proposition \ref{prop1}(2)-(3).

The BBB is closely related to another, well-known, construction in projection theory: there exists a compact set $K \subset \R^{2}$ such that $\Hd K = \tfrac{3}{2}$ and 
\begin{equation}\label{form23} \Hd \{\sigma \in [0,1) : \mathcal{H}^{1}(\pi_{\sigma}(K)) = 0\} = \tfrac{1}{2}. \end{equation}
Here $\pi_{\sigma}(x,y) = \sigma x + y$. As far as I know, a set $K$ like this was first constructed by Peltom\"aki \cite{Peltomaki} in the late 80s. I will however need a more recent (and $\delta$-discretised) incarnation presented in \cite[Appendix A.1]{MR4745881}. In fact, $K$ is nothing but a "$\tfrac{3}{2}$-dimensional grid" and the slopes $\sigma \in [0,1)$ appearing in \eqref{form23} come from a "$\tfrac{1}{2}$-dimensional arithmetic progression". It is no coincidence that the pair $(\tfrac{3}{2},\tfrac{1}{2})$ appears both in the description of the BBB, and in \eqref{form23}: the BBB is obtained by suitably applying point-line duality to the set underlying \eqref{form23}. The details are so lengthy that they are postponed to Appendix \ref{appA}. However, a depiction of the BBB is shown in Figure \ref{fig1}.

The main purpose of this section is to apply the BBB to build something that can be directly used to prove Theorem \ref{main}. The main result of the section is Theorem \ref{thm1}. Roughly speaking, Theorem \ref{thm1} contains the construction of (nearly) $\tfrac{3}{2}$-dimensional families $\mathcal{P}_{\theta} \subset \mathcal{D}_{\delta}$, indexed by $\theta \in \mathcal{D}_{\delta}([0,1))$, with the following property: $\mathcal{P}_{\theta}$ cannot "block the view" to $\mathcal{P}_{\theta'}$ along lines parallel to $\theta$ (or $\theta'$), except if $\theta,\theta'$ are "neighbours".

 The families $\mathcal{P}_{\theta}$ in Theorem \ref{thm1} are obtained by iterating the BBB many times. All direct communication with the BBB happens within Lemma \ref{prop2}, which iterates the BBB many times. Then the proof of Theorem \ref{thm1} iterates Lemma \ref{prop2} a few times more.

\begin{proposition}[Basic building block]\label{prop1} For every $\epsilon,\Delta \in 2^{-\N}$, there exists $\delta_{0} = \delta_{0}(\epsilon,\Delta) \in (0,\tfrac{1}{2}\Delta]$ such that the following holds for all $\delta \in 2^{-\N} \cap (0,\delta_{0}]$. There exists a family $\mathfrak{P} \subset \mathcal{D}_{\delta}$ with the following property. Assume that $t \in [1 + \epsilon,\tfrac{3}{2}]$, and $\mathcal{Q} \subset \mathcal{D}_{\Delta}$. Then 
\begin{equation}\label{form3} \mathcal{H}_{\infty}^{t - \epsilon}(\mathcal{Q} \cap \mathfrak{P}) \geq \mathcal{H}^{t}_{\infty}(\mathcal{Q}). \end{equation}
Moreover, for each $\sigma \in \delta \Z \cap [0,1)$ there exists a family $\mathcal{R}_{\sigma}$ of $(\delta \times \Delta)$-rectangles whose longer $\Delta$-sides have slope $\sigma$, with the following properties:
\begin{itemize}
\item[\textup{(1)} \phantomsection \label{1}] Let $\mathcal{T}_{\sigma}$ be the $\delta$-tubes spanned by the rectangles $\mathcal{R}_{\sigma}$. Every tube in $\mathcal{T}_{\sigma}$ contains exactly one element of $\mathcal{R}_{\sigma}$, and the tubes in $\mathcal{T}_{\sigma}$ are $10\delta$-separated.
\item[\textup{(2)} \phantomsection \label{2}] $\cup \mathfrak{P} \cap (\cup \{10T : T \in \mathcal{T}_{\sigma}\}) = \emptyset$.
\item[\textup{(3)} \phantomsection \label{3}] If $t \in [1 + \epsilon,\tfrac{3}{2}]$, and $\mathcal{Q} \subset \mathcal{D}_{\Delta}$  then 
\begin{displaymath} \mathcal{H}^{t - \epsilon}_{\infty}(\mathcal{Q} \cap \mathcal{D}_{\delta}(\cup \mathcal{R}_{\sigma})) \geq \mathcal{H}^{t}_{\infty}(\mathcal{Q}). \end{displaymath}
\end{itemize}
\end{proposition}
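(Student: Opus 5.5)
The plan is to build $\mathfrak{P}$ by point-line duality from the discretised Peltom\"aki-type configuration of \cite[Appendix A.1]{MR4745881}, and then to replicate it inside every $\Delta$-square. I first fix the scale-$\Delta$ picture. Normalise a single $\Delta$-square $Q$ to $[0,1)^{2}$ by the map $x \mapsto (x - x_{Q})/\Delta$, which preserves slopes, and set $\rho := \delta/\Delta$. It then suffices to construct one family $\mathfrak{P}_{0} \subset \mathcal{D}_{\rho}$ together with rectangle families $\mathcal{R}^{0}_{\sigma}$ of size $\rho \times 1$ in $[0,1)^{2}$, for all $\sigma \in \delta\Z \cap [0,1)$, satisfying (1)--(3) with $\mathcal{Q} = \{[0,1)^{2}\}$. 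The required families are then $\mathfrak{P} := \bigcup_{Q \in \mathcal{D}_{\Delta}} (\text{rescaled copy of } \mathfrak{P}_{0} \text{ in } Q)$ and $\mathcal{R}_{\sigma}$ defined in the same way.

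The point of working at unit scale is that it reduces \eqref{form3} and (3) to the single-square statements
\[ \mathcal{H}^{t-\epsilon}_{\infty}(\mathfrak{P}_{0}) \gtrsim 1 \quad \text{and} \quad \mathcal{H}^{t-\epsilon}_{\infty}(\mathcal{D}_{\rho}(\cup \mathcal{R}^{0}_{\sigma})) \gtrsim 1 . \]
Here I use a Frostman/mass-distribution argument. Put on each $Q \in \mathcal{Q}$ a measure $\mu_{Q}$ of mass $\ell(Q)^{t}$ spread uniformly over the selected $\delta$-cubes in $Q$. The single-square statement should give the finer bound $\mu_{Q}(p) \lesssim \ell(p)^{t-\epsilon/2}\Delta^{\epsilon/2}$ for dyadic $p$ with $\ell(p) \le \Delta$, and this follows from a $(t)$-Frostman (in fact $\tfrac32$-Frostman) property of $\mathfrak{P}_{0}$ at all scales between $\rho$ and $1$. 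For $p$ of size $\ge \Delta$, the bound comes from a Frostman measure realising $\mathcal{H}^{t}_{\infty}(\mathcal{Q})$. Combining the two and taking $\delta_{0}$ small, so that the constants are absorbed by $\rho^{-\epsilon/2}$, yields the loss of $\epsilon$ in the exponent. The restriction $t \ge 1+\epsilon$ should enter because the tube sets $\cup\mathcal{R}_{\sigma}$ are only $1$-dimensional in the direction of the tubes.

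The actual construction happens at unit scale and goes by duality. Let $A \subset [0,1]$ be the $\rho$-discretised $\tfrac12$-dimensional arithmetic-progression-like set (multiples of $\rho^{1/2}$, up to the iterative multiscale structure needed for all scales). Let $G = A \times B$ be the $\tfrac32$-dimensional grid, where $B$ is chosen so that for $\sigma$ in a $\tfrac12$-dimensional set $S$, the projections $\pi_{\sigma}(G)$ have small $\rho$-covering number. Dualising points $(a,b)$ to lines $y = ax - b$ turns the ``few projection values'' statement into the following: for $\sigma$ in the dual set there are many parallel lines of slope $\sigma$ missing a dense family. What I actually need is the complement, namely $\mathfrak{P}_{0}$ should be $\tfrac32$-dimensional, and for \emph{every} $\sigma \in \rho\Z$ (after rescaling, every $\delta\Z$ slope) there should be a $\tfrac12$-dimensional family of $\rho$-tubes, $10\rho$-separated, avoiding $\mathfrak{P}_{0}$ by $10\rho$. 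Concretely, I would take $\mathfrak{P}_{0}$ to be the $\rho$-squares not covered by the dual neighbourhoods of all the gap lines. I would arrange this so that for each slope $\sigma$ the gap tubes of slope $\sigma$ are translates in $\rho^{1/2}$-steps (giving $\rho^{-1/2}$ tubes, each of length $1$, so a $\tfrac32$-dimensional union). The removed set then has measure small enough that $\mathfrak{P}_{0}$ keeps most mass at each scale. Property (1) is then arranged by picking one $\rho\times 1$ rectangle per tube and thinning to $10\rho$-separation.

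I expect the main obstacle to be ensuring simultaneously, for all $\rho^{-1}$ slopes, that the removed tubes cost only an $\epsilon$-loss in the Frostman bounds of $\mathfrak{P}_{0}$ at every intermediate scale. A naive union of $\rho^{-1}\cdot\rho^{-1/2}$ tubes covers everything. The way I would try to handle this first is to make the tubes for different slopes coincide heavily: lines through a common $\tfrac12$-dimensional grid of ``pivot'' structure, as in the arithmetic-progression duality. Failing that, I would use a multiscale iteration in which each scale from $\rho^{\epsilon}$ down contributes only a bounded loss.
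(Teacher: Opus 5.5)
Your proposal has two genuine gaps.

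\textbf{The reduction to one normalised $\Delta$-square fails.} Properties (1)--(2) concern the full $\delta$-tubes $T \in \mathcal{T}_{\sigma}$, not just the rectangles. A tube spanned by a rectangle in one $\Delta$-square continues through $\sim \Delta^{-1}$ other $\Delta$-squares. In the normalised coordinates of the horizontal neighbour, its intercept is shifted by $\sigma$. The rescaled copy of $\mathfrak{P}_{0}$ in that square was only built to avoid that square's own tubes, so (2) can fail. Tubes coming from different squares can also align, so a single tube contains several rectangles, which violates (1). That "exactly one rectangle per tube" property is precisely what Lemma \ref{prop2} uses to get $\diam_{\theta}(\mathcal{P}_{\theta}) \leq \Delta$.

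The paper avoids this by working globally. For each $\sigma$ there is one family of $\sim \delta^{-1/2}$ tubes, $\delta^{1/2}$-separated, crossing the whole square. Each tube receives a single $\Delta$-long rectangle via the deterministic rule $R_{\sigma,b_{j}} = T_{\sigma,b_{j}} \cap \bar{\pi}_{\sigma}^{-1}(2I_{i})$ with $j \equiv i \pmod m$. This spreading guarantees that every $\underline{\Delta}$-square is crossed by $\gtrsim \Delta \underline{\Delta}\delta^{-1/2}$ rectangles, and that is what proves (3).

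Your Frostman-measure reduction is a legitimate alternative to the paper's cover manipulation. However, absorbing constants by $\rho^{-\epsilon/2}$ does not handle scales comparable to $\Delta$. The paper needs the intermediate scale $\underline{\Delta}$ with $\underline{\Delta}^{-\epsilon/2} \geq \max\{C,\Delta^{-1}\}$ exactly for this.

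\textbf{The core construction is left open.} You correctly note that the union of all the tubes covers everything. But "make the tubes coincide heavily" or "multiscale iteration" is not an argument. The aim that $\mathfrak{P}_{0}$ "keeps most mass at each scale" is also not what happens; in fact $\mathfrak{P}$ is a very thin set.

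The missing idea is the correct form of the duality. The vertical line $\{x = \zeta\}$ meets $\bigcup_{(a,b)} \ell_{a,b}$ exactly in $\{\zeta\} \times \pi_{\zeta}(\mathcal{P})$. So small projections translate into sparse vertical slices of the dual line family, not into "parallel lines of slope $\sigma$ missing a dense family". The paper exploits this in two steps.
\begin{itemize}
\item It takes $\mathcal{T}_{\sigma}$ dual to the vertical slice at $x = \sigma$ of a slightly rotated $\tfrac{3}{2}$-dimensional grid $\mathcal{P}$ (Lemma \ref{lemma1}). The rotation ensures every $\sigma \in \delta\Z \cap [0,1)$ gets $\sim \delta^{-1/2}$ tubes. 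Your product grid $A \times B$ would only serve slopes in $A$.
\item Then the entire family $\bigcup_{\sigma} 10\mathcal{T}_{\sigma}$, over all slopes at once, meets each column $[\zeta,\zeta+\delta) \times [0,1)$ with $\zeta \in \Sigma$ in only $\lesssim |\pi_{\zeta}(\mathcal{P})|_{\delta} \lesssim \delta^{-(s + 3/2)/2}$ squares. That is a vanishing fraction of the column's $\delta^{-1}$ squares. Here $\Sigma$ is the $\delta^{s}$-separated exceptional set with $s = (1-\epsilon)/2$.
\end{itemize}
Hence $\mathfrak{P}$ is defined as the uncovered squares in these $\delta^{-s}$ columns. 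This gives a $(1+s)$-dimensional set with the two-regime non-concentration bounds: $|\mathfrak{P} \cap W| \lesssim \ell(W)^{2}\delta^{-1-s}$ for $\ell(W) \geq \delta^{s}$, and $|\mathfrak{P} \cap W| \lesssim \ell(W)\delta^{-1}$ below that scale. The column regime is also where $t \geq 1 + \epsilon$ enters.

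Without identifying where $\mathfrak{P}$ lives, (2) and \eqref{form3} are not established simultaneously.
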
 

\begin{remark} The family $\mathfrak{P}$ and the families $\mathcal{R}_{\sigma},\mathcal{T}_{\sigma}$ do not depend on $\mathcal{Q},t$. \end{remark} 

\begin{figure}[h!]
\begin{center}
\begin{overpic}[scale = 0.7]{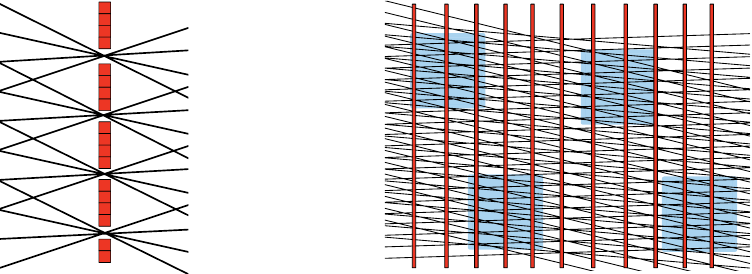}
\end{overpic}
\caption{The basic building block from Proposition \ref{prop1} at a microscopic scale (left) and a macroscopic scale (right). The picture on the left shows how the tubes $T \in \mathcal{T}_{\sigma}$ avoid $\mathfrak{P}$ at scale $\delta$. The picture on the right shows how both $\mathfrak{P}$ and $\mathcal{T}_{\sigma}$ have large intersection with any family of $\mathcal{Q} \in \mathcal{D}_{\Delta}$.}\label{fig2}
\end{center}
\end{figure}

From now on, the symbol $\theta$ will denote an interval of slopes, for example $\theta \in \mathcal{D}_{\delta}([0,1))$. The symbol $\sigma$ will refer to slopes (points in $[0,1)$).

\begin{definition}[$\theta$-disconnected pair]\label{def:disconnectedPair} Let $\delta \in 2^{-\N}$ and $\theta \in \mathcal{D}_{\delta}([0,1))$. A pair $\mathcal{P}_{1},\mathcal{P}_{2} \subset \mathcal{D}$ is \emph{$\theta$-disconnected} if there exists no line $\ell \subset \R^{2}$ with slope $\sigma(\ell) \in \theta$ such that 
\begin{displaymath} (\cup \mathcal{P}_{1}) \cap [\ell]_{\delta/2} \neq \emptyset \quad \text{and} \quad (\cup \mathcal{P}_{2}) \cap [\ell]_{\delta/2} \neq \emptyset. \end{displaymath} \end{definition}

We also need the following related notion:

\begin{definition}[$\theta$-diameter] Let $\delta \in 2^{-\N}$ and $\theta \in \mathcal{D}_{\delta}([0,1))$. The \emph{$\theta$-diameter of $\mathcal{P} \subset \mathcal{D}$} is defined by 
\begin{displaymath} \diam_{\theta}(\mathcal{P}) := \sup_{\sigma(\ell) \in \theta} \diam(\cup \mathcal{P} \cap [\ell]_{\delta/2}), \end{displaymath} 
where the "$\sup$" runs over all lines $\ell \subset \R^{2}$ with slope $\sigma(\ell) \in \theta$.\end{definition} 

\begin{thm}\label{thm1} For each $\epsilon,\Delta \in 2^{-\N}$ there exists $\delta = \delta(\epsilon,\Delta) \in 2^{-\N} \cap (0,\tfrac{1}{2}\Delta]$ such that the following holds.
For each $\boldsymbol{\theta} \subset \mathcal{D}_{\Delta}([0,1))$, let $\mathcal{P}_{\boldsymbol{\theta}} \subset \mathcal{D}_{\Delta}$ (an arbitrary family). 

Then, for each $\theta \in \mathcal{D}_{\delta}([0,1))$, there exists a family $\mathcal{P}_{\theta} \subset \mathcal{D}_{\delta}$ with the following properties:
\begin{itemize}
\item[\textup{(A)} \phantomsection \label{A}] $\mathcal{P}_{\theta} \subset \mathcal{D}_{\delta}(\mathcal{P}_{\boldsymbol{\theta}})$ for $\boldsymbol{\theta} \in \mathcal{D}_{\Delta}([0,1))$ and $\theta \in \mathcal{D}_{\delta}(\boldsymbol{\theta})$.
\item[\textup{(B)} \phantomsection \label{B}] $\diam_{\theta}(\mathcal{P}_{\theta}) \leq \Delta$ for all $\theta \in \mathcal{D}_{\delta}([0,1))$.
\item[\textup{(C)} \phantomsection \label{C}] $\mathcal{H}^{t - \epsilon}_{\infty}(\mathcal{P}_{\theta}) \geq \mathcal{H}^{t}_{\infty}(\mathcal{P}_{\boldsymbol{\theta}})$ for all $\boldsymbol{\theta} \in \mathcal{D}_{\Delta}([0,1))$, $\theta \in \mathcal{D}_{\delta}(\boldsymbol{\theta})$, and $t \in [1 + \epsilon,\tfrac{3}{2}]$.
\item[\textup{(D)} \phantomsection \label{D}] If $\boldsymbol{\theta},\boldsymbol{\theta}' \in \mathcal{D}_{\Delta}([0,1))$ are distinct, and $\theta \in \mathcal{D}_{\delta}(\boldsymbol{\theta})$ and $\theta' \in \mathcal{D}_{\delta}(\boldsymbol{\theta}')$, then the pair $\mathcal{P}_{\theta},\mathcal{P}_{\theta'}$ is $\theta$-disconnected and $\theta'$-disconnected.
\end{itemize}
\end{thm}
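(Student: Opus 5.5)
Given $\epsilon,\Delta$, the plan is to build $\mathcal{P}_\theta$ by iterating Proposition \ref{prop1} through a chain of scales $\Delta = \Delta_0 > \Delta_1 > \dots > \Delta_N = \delta$. Each step costs a small loss $\epsilon/N$ in the Hausdorff content exponent, so \nref{C} follows by chaining \eqref{form3} and \nref{3}. The paper announces that this goes through an intermediate Lemma \ref{prop2}, which iterates the BBB, followed by a few more iterations in the proof of Theorem \ref{thm1}. I will organise it the same way.

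\textbf{First step (disconnection).} Fix distinct $\boldsymbol{\theta},\boldsymbol{\theta}'$. I want to use the BBB at some scale $\delta_1 \ll \Delta$ with slope $\sigma$ ranging over $\boldsymbol{\theta}$. The tube families $\mathcal{T}_\sigma$ avoid $\mathfrak{P}$ by \nref{2}, with margin $10\delta_1$, and carry large content by \nref{3}. The idea is an asymmetric assignment:
\begin{itemize}
\item[(i)] the family attached to $\boldsymbol{\theta}$ is placed inside the rectangles $\mathcal{R}_\sigma$, i.e. along tubes of slope $\sigma \in \boldsymbol{\theta}$;
\item[(ii)] the families attached to the other dyadic intervals are placed inside $\mathfrak{P}$.
\end{itemize}
Then a line of slope in $\theta$ near a tube of $\mathcal{T}_\sigma$ cannot meet $\mathfrak{P}$ within the $10\delta_1$ margin. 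This requires $|\sigma(\ell) - \sigma| \lesssim \delta_1$ over the $\Delta$-length of the rectangle, so $\delta_1 \approx \delta$.

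This gives only one-directional separation, and only for $\boldsymbol{\theta}$ against everything else. To obtain \nref{D} for all pairs simultaneously, I would iterate over the $\Delta^{-1}$ intervals $\boldsymbol{\theta}_1,\dots,\boldsymbol{\theta}_M$. At stage $j$, I refine all current families: $\boldsymbol{\theta}_j$ goes into the rectangles and the rest into $\mathfrak{P}$, using a fresh finer scale each time. The point is that refinement preserves previously established disconnection, since subsets of disconnected pairs stay disconnected. The exception is that a $\delta/2$-neighbourhood at the final scale is smaller than at the stage scale, which only helps.

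\textbf{Main obstacle.} Disconnection at a stage scale $\delta_j$ must be with respect to lines of slope in $\theta$, an interval of length $\delta$ much smaller than $\delta_j$. Yet the BBB provides tubes only for slopes in $\delta_j\Z$. Lines with slope in a $\delta$-interval around $\sigma \in \delta_j\Z$ drift by up to $\delta_j$ over distance $1$, which would exit the $10\delta_j$ margin and could touch $\mathfrak{P}$.

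The fix is \nref{B}: the whole family $\mathcal{P}_\theta$ only needs to be visible along pieces of length $\Delta$. So I work in $\Delta$-cells, or apply the BBB rescaled into each $\Delta$-square. Over length $\Delta$, slopes within $\delta_j/\Delta$ of $\sigma$ are fine. For the finitely many (at most $\Delta^{-1}$) relevant slopes in each stage, I choose $\sigma$ to be the nearest grid point to $\theta$. Since $\mathcal{R}_\sigma$ depends on $\sigma$, the family for $\theta$ uses $\mathcal{R}_{\sigma(\theta)}$.

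Property \nref{B} itself comes from the rectangles. Within one tube there is exactly one rectangle of length $\Delta$, and the tubes are $10\delta$-separated. Hence a $\delta$-neighbourhood of a line of slope in $\theta$ meets essentially one rectangle, giving $\theta$-diameter at most about $\Delta$. To make this exact I would run the BBB at scale $\Delta/10$ for the first stage.

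Finally, \nref{A} holds because every refinement is of the form $\mathcal{Q}\cap\cdot$ with $\mathcal{Q}=\mathcal{P}_{\boldsymbol\theta}$. \nref{C} follows from $N \lesssim \Delta^{-1}$ applications of \eqref{form3} or \nref{3}, each with loss $\epsilon\Delta/2$. The final $\delta$ is taken as the last stage scale, chosen small enough relative to all $\delta_0$'s.
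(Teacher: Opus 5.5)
\textbf{The gap: your handling of the slope mismatch would break (D).}

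Your ``main obstacle'' is not actually an obstacle. Let $\sigma\in\delta_j\Z$ be the left endpoint of the $\delta_j$-arc containing $\theta$. Every line with slope in $\theta$ then differs in slope from $\sigma$ by less than $\delta_j$. So across $[0,1]^2$ it drifts by less than $\delta_j$ relative to a tube $T\in\mathcal{T}_\sigma$. The enlargement $10T$ in Proposition \ref{prop1}(2), which has half-width $5\delta_j$, exists exactly to absorb this drift. The paper uses precisely this argument: if $[\ell]_{\delta_j/2}$ meets a square of $\mathcal{D}_{\delta_j}(\cup\mathcal{T}_\sigma)$, then $[\ell]_{\delta_j/2}\cap[0,1]^2\subset\cup\{10T : T\in\mathcal{T}_\sigma\}$, which misses $\cup\mathfrak{P}_j$.

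The fix you propose is more seriously wrong. $\theta$-disconnectedness quantifies over entire lines: $[\ell]_{\delta/2}$ must miss $\cup\mathcal{P}_{\theta'}$ everywhere in $[0,1]^2$. Section \ref{s3} needs exactly this to conclude $\ell\cap\overline{\cup\mathcal{P}_{\theta'}}=\emptyset$. Property (B) only limits how long $\ell$ stays close to $\mathcal{P}_\theta$ itself; it says nothing about where $\ell$ goes afterwards. If you rescale the BBB into each $\Delta$-square, (2) keeps each copy of $\mathfrak{P}$ off the tubes of that copy only. Once $\ell$ leaves the $\Delta$-square containing its rectangle, it crosses other copies of $\mathfrak{P}$ built around unrelated tubes, and it can hit the other families there. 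Your inference ``$\delta_1\approx\delta$'' and the count of ``at most $\Delta^{-1}$ relevant slopes'' come from the same confusion. In fact there are $\Delta/\delta_j$ stage arcs inside $\boldsymbol{\theta}_j$. This is harmless, because the BBB supplies $\mathcal{R}_\sigma$ for every $\sigma\in\delta_j\Z$ at once.

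\textbf{The repair.} Drop the localisation and index by stage arcs, as the paper does.
\begin{enumerate}
\item At stage $j$, define families for $\bar\theta\in\mathcal{D}_{\delta_j}(\boldsymbol{\theta}_j)$ using $\mathcal{R}_\sigma$, with $\sigma$ the left endpoint of $\bar\theta$.
\item Prove $\bar\theta$-disconnectedness with respect to $[\ell]_{\delta_j/2}$ by the drift argument above.
\item Every final arc $\theta\subset\bar\theta$ inherits it: its family is a subfamily, it has fewer slopes, and $[\ell]_{\delta/2}\subset[\ell]_{\delta_j/2}$.
\end{enumerate}

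\textbf{Comparison with the paper's route.} With this change your single pass works, and it is leaner than the paper's argument. Since \eqref{form3} holds for all $\mathcal{Q}$ simultaneously, one BBB application per arc suffices. At stage $j$, the families of $\boldsymbol{\theta}_j$ go into rectangles and every other current family is intersected with the single set $\mathfrak{P}_j$. Each pair $i\neq j$ is then handled at stage $i$ and at stage $j$.

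The paper instead first proves Lemma \ref{prop2}, which itself runs the BBB once per arc and includes a reduction to $|\mathcal{M}|=1$. It then applies that lemma $m$ times with the opposite asymmetry. At step $k+1$, the $\boldsymbol{\theta}_{k+1}$-families play the role of the sets $\mathcal{F}\in\mathcal{M}$ sent into $\mathfrak{P}$, and all other families go into rectangles. Either asymmetry yields two-sided disconnection after all $m$ stages.
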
 

Theorem \ref{thm1} will be proved by iterating the following auxiliary result:

\begin{lemma}\label{prop2} For every $\epsilon,\Delta \in 2^{-\N}$ and $M \in \N$ there exists $\delta = \delta(\Delta,\epsilon,M) \in (0,\tfrac{1}{2}\Delta]$ such that the following holds. Assume that for each $\boldsymbol{\theta} \in \mathcal{D}_{\Delta}([0,1))$ we are given a family $\mathcal{P}_{\boldsymbol{\theta}} \subset \mathcal{D}_{\Delta}$. Let $\mathcal{M}$ be a finite family of subsets of $\mathcal{D}_{\Delta}$ with $|\mathcal{M}| \leq M$.

First, for each $\theta \in \mathcal{D}_{\delta}([0,1))$ there exists a family $\mathcal{P}_{\theta} \subset \mathcal{D}_{\delta}$ with the following properties:
\begin{itemize}
\item[\textup{(i)} \phantomsection \label{i}] If $\boldsymbol{\theta} \in \mathcal{D}_{\Delta}([0,1))$, and $\theta \in \mathcal{D}_{\delta}(\boldsymbol{\theta})$, then $\mathcal{P}_{\theta} \subset \mathcal{D}_{\delta}(\mathcal{P}_{\boldsymbol{\theta}})$. Moreover $\diam_{\theta}(\mathcal{P}_{\theta}) \leq \Delta$.
\item[\textup{(ii)} \phantomsection \label{ii}] $\mathcal{H}^{t - \epsilon}_{\infty}(\mathcal{P}_{\theta}) \geq \mathcal{H}^{t}(\mathcal{P}_{\boldsymbol{\theta}})$ for all $\theta \in \mathcal{D}_{\delta}(\boldsymbol{\theta})$ and $\boldsymbol{\theta} \in \mathcal{D}_{\Delta}([0,1))$, and all $t \in [1 + \epsilon,\tfrac{3}{2}]$.
\end{itemize}
Second, for each $\mathcal{F} \in \mathcal{M}$ there exists a subset $\mathcal{F}' \subset \mathcal{D}_{\delta}(\mathcal{F})$ such that $\mathcal{H}^{t - \epsilon}_{\infty}(\mathcal{F}') \geq \mathcal{H}^{t}_{\infty}(\mathcal{F})$ for all $t \in [1 + \epsilon,\tfrac{3}{2}]$. The family of these subsets $\mathcal{F}'$ is denoted $\mathcal{M}'$.

Third, every pair $\mathcal{P}_{\theta},\mathcal{F}'$ is $\theta$-disconnected, with $\theta \in \mathcal{D}_{\delta}([0,1))$ and $\mathcal{F}' \in \mathcal{M}'$. \end{lemma}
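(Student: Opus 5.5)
The plan is to apply the basic building block, Proposition \ref{prop1}, once, with a slightly smaller scale in place of $\Delta$, and to read off all three conclusions from it. Fix $\epsilon,\Delta$ and apply Proposition \ref{prop1} with parameters $\epsilon$ and $\Delta/2$. Pick $\delta \leq \delta_{0}(\epsilon,\Delta/2)$, and take $\delta$ small enough that $O(\delta)$ errors are negligible compared with $\Delta$. This yields the family $\mathfrak{P} \subset \mathcal{D}_{\delta}$, and for each $\sigma \in \delta\Z \cap [0,1)$ the rectangles $\mathcal{R}_{\sigma}$ and tubes $\mathcal{T}_{\sigma}$. The cubes $\mathcal{Q} \subset \mathcal{D}_{\Delta}$ are unions of $\mathcal{D}_{\Delta/2}$-cubes with the same $\mathcal{H}^{t}_{\infty}$-content up to the obvious comparison, so \eqref{form3} and (3) still apply to $\mathcal{Q} \subset \mathcal{D}_{\Delta}$. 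If necessary, absorb a bounded constant into the $\epsilon$-loss by running the proposition with $\epsilon/2$. The bound $|\mathcal{M}| \leq M$ should play no essential role here: one $\mathfrak{P}$ serves all $\mathcal{F} \in \mathcal{M}$ simultaneously, because $\mathfrak{P}$ is independent of $\mathcal{Q}$.

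Definitions. For $\mathcal{F} \in \mathcal{M}$ set $\mathcal{F}' := \mathcal{F} \cap \mathfrak{P}$. Then \eqref{form3} gives $\mathcal{H}^{t-\epsilon}_{\infty}(\mathcal{F}') \geq \mathcal{H}^{t}_{\infty}(\mathcal{F})$. For $\theta \in \mathcal{D}_{\delta}(\boldsymbol{\theta})$, let $\sigma \in \delta\Z$ be the left endpoint of $\theta$, and set $\mathcal{P}_{\theta} := \mathcal{P}_{\boldsymbol{\theta}} \cap \mathcal{D}_{\delta}(\cup \mathcal{R}_{\sigma})$. Then the inclusion in (i) is immediate. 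Property (3) gives $\mathcal{H}^{t-\epsilon}_{\infty}(\mathcal{P}_{\theta}) \geq \mathcal{H}^{t}_{\infty}(\mathcal{P}_{\boldsymbol{\theta}})$, which is at least as strong as (ii), since (ii) is stated with $\mathcal{H}^{t}$ and $\mathcal{H}^{t} \geq \mathcal{H}^{t}_{\infty}$.

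Disconnection. Let $\ell$ have slope $\sigma' \in \theta$, so $|\sigma'-\sigma| \leq \delta$, and suppose $[\ell]_{\delta/2}$ meets a cube $p \in \mathcal{P}_{\theta}$. The cube $p$ meets some $R \in \mathcal{R}_{\sigma}$ spanning a tube $T$. Hence $\ell$ passes within $\sqrt{2}\delta + \delta/2$ of the central line of $T$ at some point of $[0,1]^{2}$. Since all cubes live in $[0,1]^{2}$ and the slopes differ by at most $\delta$, the relevant parts of $\ell$ and of the axis of $T$ inside $[0,1]^{2}$ stay within roughly $\sqrt{2}\delta + \delta/2 + \sqrt 2\delta < 5\delta$ of each other. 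If one does not want to fight constants, one can instead use a constant multiple of $\delta$ in the statement of Proposition \ref{prop1}, or choose $\sigma$ as the nearest grid point. Therefore $[\ell]_{\delta/2} \cap [0,1]^{2} \subset 10T$, and by (2) it misses $\cup\mathfrak{P} \supset \cup\mathcal{F}'$. So $\mathcal{P}_{\theta},\mathcal{F}'$ is $\theta$-disconnected.

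Diameter bound. The same geometry handles $\diam_{\theta}(\mathcal{P}_{\theta}) \leq \Delta$. The tubes in $\mathcal{T}_{\sigma}$ are parallel and $10\delta$-separated, so $[\ell]_{\delta/2} \cap [0,1]^2$ can only come within $O(\delta)$ of the single tube $T$ found above. Each tube contains exactly one rectangle $R$ by (1). Hence $\cup\mathcal{P}_{\theta} \cap [\ell]_{\delta/2}$ lies in the $O(\delta)$-neighbourhood of one $(\delta \times \Delta/2)$-rectangle, whose diameter is at most $\Delta/2 + O(\delta) \leq \Delta$.

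The main obstacle is exactly this bookkeeping of the $O(\delta)$ errors: the slope discrepancy within $\theta$, the $\delta/2$-thickening, and the fact that cubes of $\mathcal{D}_{\delta}(\cup\mathcal{R}_{\sigma})$ protrude slightly from $R$. All of these must fit inside the $10\times$ enlargement in (2) and inside the $10\delta$-separation in (1). If the constants are too tight, I would first try replacing $\delta$ by $\delta/C$ in the application of Proposition \ref{prop1} while keeping the slope grid at spacing $\delta$, so that all errors become $O(\delta/C)$-small relative to the available margins.
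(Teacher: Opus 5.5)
Your proposal is correct, and it is simpler than the paper's route. The paper first reduces to $M=1$ by applying the lemma to the members of $\mathcal{M}$ one after another, splitting $\epsilon$. For $M=1$ it then treats the $m=\Delta^{-1}$ arcs $\boldsymbol{\theta}_1,\ldots,\boldsymbol{\theta}_m$ one at a time. At step $k$ it applies Proposition \ref{prop1} at a new scale $\delta_k<\delta_{k-1}$ with loss $\epsilon/m$, thins $\mathcal{F}_k:=\mathcal{F}_{k-1}\cap\mathfrak{P}_k$, and defines $\mathcal{P}_\theta$ only for $\theta\subset\boldsymbol{\theta}_k$. At the end it refines all families to $\delta=\delta_m$. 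You instead use two facts: $\mathfrak{P}$ does not depend on $\mathcal{Q}$, and property (2) holds for all $\sigma\in\delta\Z\cap[0,1)$ simultaneously. Hence a single application of the building block serves every arc and every $\mathcal{F}\in\mathcal{M}$ at once. This gives a $\delta$ depending only on $(\epsilon,\Delta)$, a single $\epsilon$-loss, and no nested refinement. As far as I can see, the paper's iteration gains nothing extra for this lemma. Iterating over arcs is genuinely needed only in the proof of Theorem \ref{thm1}, where the families attached to different arcs must be disconnected from each other.

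A few details to fix.

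Your justification of (ii) runs the wrong way. Since $\mathcal{H}^t\geq\mathcal{H}^t_\infty$, a lower bound by $\mathcal{H}^t_\infty(\mathcal{P}_{\boldsymbol{\theta}})$ is \emph{weaker} than one by $\mathcal{H}^t(\mathcal{P}_{\boldsymbol{\theta}})$. The literal version cannot hold anyway: for nonempty $\mathcal{P}_{\boldsymbol{\theta}}$ and $t<2$ its right-hand side is $\infty$. So $\mathcal{H}^t$ there is a typo for $\mathcal{H}^t_\infty$, which is what you prove and what the paper uses.

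The passage from $\Delta/2$ to $\Delta$ is exact, not merely up to constants. Replacing $\mathcal{Q}$ by its children changes neither $\cup\mathcal{Q}$ nor $\mathcal{Q}\cap\mathfrak{P}$, so no hedging with $\epsilon/2$ is needed.

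The constants fit without any fallback. Your tally omits two terms: the $\delta/2$ between $z$ and $\ell$, and the $\delta/2$ thickening of $[\ell]_{\delta/2}$. Let $L_T$ denote the axis of $T$. Even with these terms, every $w\in[\ell]_{\delta/2}\cap[0,1]^2$ satisfies $\dist(w,L_T)\leq(\tfrac32+2\sqrt{2})\delta+\delta^2<5\delta$. So this set lies in $10T=[L_T]_{5\delta}$.

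The diameter bound also follows. Suppose another tube $T'$ has a rectangle meeting a cube of $\mathcal{P}_\theta$ that is hit by $[\ell]_{\delta/2}$. Then the axis of $T'$ lies within $(2+3\sqrt{2})\delta+\delta^2<10\delta$ of $L_T$, contradicting (1).

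Your backup plan of running Proposition \ref{prop1} at scale $\delta/C$ while keeping arcs of length $\delta$ would actually make matters worse. The slope spread within $\theta$ and the thickening in $\theta$-disconnectedness are tied to the length of $\theta$, not to the tube width, so shrinking the tubes only shrinks the available margin.
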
 

\begin{proof} We start by reducing the proof to the case where $M = 1$. Indeed, if $M > 1$, we apply the proposition $M$ times consecutively. We give the details for the case $M = 2$, thus $\mathcal{M} = \{\mathcal{F}_{1},\mathcal{F}_{2}\}$; the general case should be clear after this, and only notationally messier.

Start by applying the case $M = 1$ of the proposition with constants $\Delta,\epsilon/2$ and data 
\begin{displaymath} \{\mathcal{P}_{\boldsymbol{\theta}}\}_{\boldsymbol{\theta} \in \mathcal{D}_{\Delta}([0,1))} \quad \text{and} \quad \{\mathcal{F}_{1}\}. \end{displaymath}
This yields 
\begin{itemize}
\item a scale $\delta_{0} \in 2^{-\N} \cap (0,\tfrac{1}{2}\Delta]$, 
\item families $\mathcal{P}_{\theta} \subset \mathcal{D}_{\delta_{0}}$, $\theta \in \mathcal{D}_{\delta_{0}}([0,1))$, which satisfy \nref{i}-\nref{ii} (with constant $\epsilon/2$), and
\item a family $\mathcal{F}_{1}' \subset \mathcal{D}_{\delta_{0}}(\mathcal{F}_{1})$, such that $\mathcal{H}^{t - \epsilon}_{\infty}(\mathcal{F}_{1}') \geq \mathcal{H}^{t}_{\infty}(\mathcal{F}_{1})$ for every $t \in [1 + \epsilon,\tfrac{3}{2}]$, and every pair $\mathcal{P}_{\theta},\mathcal{F}_{1}'$ is $\theta$-disconnected, $\theta \in \mathcal{D}_{\delta_{0}}([0,1))$.
\end{itemize}

Next, we reapply the case $M = 1$ of the proposition, with constants $\delta_{0},\epsilon/2$ and data 
\begin{displaymath} \{\mathcal{P}_{\theta}\}_{\theta \in \mathcal{D}_{\delta_{0}}([0,1))} \quad \text{and} \quad \{\mathcal{F}_{2}\}. \end{displaymath}
This gives another scale $\delta \in 2^{-\N} \cap (0,\delta_{0}]$, new families $\mathcal{P}_{\theta'} \subset \mathcal{D}_{\delta}$, $\theta' \in \mathcal{D}_{\delta}([0,1))$, satisfying \nref{i}-\nref{ii} relative to the families $\{\mathcal{P}_{\theta}\}_{\theta \in \mathcal{D}_{\delta_{0}}([0,1))}$, and finally a set $\mathcal{F}_{2}' \in \mathcal{D}_{\delta}(\mathcal{F}_{2})$, such that every pair $\mathcal{P}_{\theta'},\mathcal{F}_{2}'$ is $\theta'$-disconnected, for $\theta' \in \mathcal{D}_{\delta}([0,1))$. 

We claim that the collections $\{\mathcal{P}_{\theta'}\}_{\theta' \in \mathcal{D}_{\delta}([0,1))}$ and $\mathcal{M}' = \{\mathcal{F}_{1}',\mathcal{F}_{2}'\}$ satisfy all the claims of Lemma \ref{prop2}. To be precise, we redefine $\mathcal{F}_{1}' := \mathcal{D}_{\delta}(\mathcal{F}_{1}')$ to comply with the requirement $\mathcal{F}_{1}',\mathcal{F}_{2}' \subset \mathcal{D}_{\delta}$. This has no effect on the properties claimed about $\mathcal{F}_{1}'$ so far (Hausdorff content, disconnectedness of $\mathcal{P}_{\theta},\mathcal{F}_{1}'$).

We start with \nref{i}-\nref{ii}. Fix $\boldsymbol{\theta} \in \mathcal{D}_{\Delta}([0,1))$ and $\theta' \in \mathcal{D}_{\delta}(\boldsymbol{\theta})$. Let $\theta \in \mathcal{D}_{\delta_{0}}([0,1))$ be the unique dyadic arc with $\theta' \subset \theta \subset \boldsymbol{\theta}$. Then $\mathcal{P}_{\theta'} \subset \mathcal{P}_{\theta} \subset \mathcal{P}_{\boldsymbol{\theta}}$ by construction, and also
\begin{displaymath} \mathcal{H}_{\infty}^{t - \epsilon}(\mathcal{P}_{\theta'}) \geq \mathcal{H}_{\infty}^{t - \epsilon/2}(\mathcal{P}_{\theta}) \geq \mathcal{H}_{\infty}^{t}(\mathcal{P}_{\boldsymbol{\theta}}), \qquad t \in [1 + \epsilon,\tfrac{3}{2}]. \end{displaymath} 
This verifies \nref{i}-\nref{ii} (note also that $\diam_{\theta'}(\mathcal{P}_{\theta'}) \leq \delta_{0} \leq \Delta$ for all $\theta' \in \mathcal{D}_{\delta}([0,1))$.) The families $\mathcal{F}_{j}'$, $j \in \{1,2\}$, evidently satisfy $\mathcal{H}^{t - \epsilon}_{\infty}(\mathcal{F}_{j}') \geq \mathcal{H}^{t}_{\infty}(\mathcal{F}_{j})$ for each $t \in [1 + \epsilon,\tfrac{3}{2}]$. So, we only need to check that every pair $\mathcal{P}_{\theta'},\mathcal{F}_{j}'$ is $\theta'$-disconnected, for $\theta' \in \mathcal{D}_{\delta}([0,1))$ and $j \in \{1,2\}$. This is clear for $j = 2$ by the choice of the sets $\mathcal{P}_{\theta'}$ and $\mathcal{F}_{2}'$. So, it remains to check that every pair $\mathcal{P}_{\theta}',\mathcal{F}_{1}'$ is $\theta$-disconnected, for $\theta' \in \mathcal{D}_{\delta}([0,1))$.

For this we use property \nref{i}. Fix $\theta' \in \mathcal{D}_{\delta}([0,1))$, and let $\theta \in \mathcal{D}_{\delta_{0}}([0,1))$ be the dyadic parent of $\theta'$. Then $\mathcal{P}_{\theta'} \subset \mathcal{D}_{\delta}(\mathcal{P}_{\theta})$ according to \nref{i}, and moreover $\mathcal{P}_{\theta},\mathcal{F}_{1}'$ is $\theta$-disconnected by the first step of the construction. This means that if $\ell \subset \R^{2}$ is a line with $\sigma(\ell) \in \theta$, then $[\ell]_{\delta_{0}}$ cannot intersect both $\cup \mathcal{P}_{\theta}$ and $\cup \mathcal{F}_{1}'$. This certainly implies that if $\ell \subset \R^{2}$ is a line with $\sigma(\ell) \in \theta' \subset \theta$, then $[\ell]_{\delta/2}$ cannot intersect both $\cup \mathcal{P}_{\theta'} \subset \cup \mathcal{P}_{\theta}$ and $\cup \mathcal{F}_{1}'$. Therefore $\mathcal{P}_{\theta'},\mathcal{F}_{1}'$ is $\theta'$-disconnected. This concludes the proof of the case $M = 2$ (the proofs in the cases $M \geq 3$ are similar and left to the reader).

It remains to prove the case $M = 1$. Write $\mathcal{M} =: \{\mathcal{F}\} \subset \mathcal{D}_{\Delta}$. Recall that the families $\mathcal{P}_{\boldsymbol{\theta}} \subset \mathcal{D}_{\Delta}$ are indexed by the arcs $\boldsymbol{\theta} \in \mathcal{D}_{\Delta}([0,1))$. Let us enumerate these arcs $\{\boldsymbol{\theta}_{1},\ldots,\boldsymbol{\theta}_{m}\}$, where $m = \Delta^{-1}$. We first consider the pair $(\mathcal{P}_{\boldsymbol{\theta}_{1}},\mathcal{F})$. 

We apply Proposition \ref{prop1} with parameters $\epsilon/m = \Delta \epsilon$ and $\tfrac{1}{2}\Delta$. This gives a scale $\delta_{1} \in 2^{-\N} \cap (0,\tfrac{1}{2}\Delta]$, and a family $\mathfrak{P}_{1} \subset \mathcal{D}_{\delta_{1}}$ with satisfying $\mathcal{H}_{\infty}^{t - \epsilon/m}(\mathcal{F} \cap \mathfrak{P}_{1}) \geq \mathcal{H}^{t}_{\infty}(\mathcal{F})$ for all $t \in [\epsilon/m,\tfrac{3}{2}]$. Set $\mathcal{F}_{1} := \mathcal{F} \cap \mathfrak{P}_{1}$, thus 
\begin{equation}\label{form13} \mathcal{F}_{1} \subset \mathcal{D}_{\delta_{1}}(\mathcal{F}) \quad \text{and} \quad \mathcal{H}^{t - \epsilon/m}_{\infty}(\mathcal{F}_{1}) \geq \mathcal{H}^{t}_{\infty}(\mathcal{F}) \text{ for } t \in [1 + \epsilon,\tfrac{3}{2}]. \end{equation}
Moreover, for each $\theta \in \mathcal{D}_{\delta_{1}}([0,1))$ (in particular $\theta \in \mathcal{D}_{\delta_{1}}(\boldsymbol{\theta}_{1})$), Proposition \ref{prop1} gives a family $\mathcal{T}_{\theta}$ of $\delta_{1}$-tubes parallel to (the slope determined by the left endpoint of) $\theta$, and associated $(\delta \times \Delta/2)$-rectangles $\mathcal{R}_{\theta}$, such that
\begin{itemize}
\item[(a) \phantomsection \label{a}] $\cup \mathcal{F}_{1} \cap (\cup \{10T : T \in \mathcal{T}_{\theta}\}) = \emptyset$,
\item[(b) \phantomsection \label{b}] $\mathcal{H}^{t - \epsilon}_{\infty}(\mathcal{P}_{\boldsymbol{\theta}_{1}} \cap \mathcal{D}_{\delta_{1}}(\cup \mathcal{R}_{\theta})) \geq \mathcal{H}^{t}_{\infty}(\mathcal{P}_{\boldsymbol{\theta}_{1}})$, for $\theta \in \mathcal{D}_{\delta_{1}}(\boldsymbol{\theta}_{1})$ and $t \in [1 + \epsilon,\tfrac{3}{2}]$.
\end{itemize}
We now define 
\begin{displaymath} \mathcal{P}_{\theta} := \mathcal{P}_{\boldsymbol{\theta}_{1}} \cap \mathcal{D}_{\delta_{1}}(\cup \mathcal{R}_{\theta}), \qquad \theta \in \mathcal{D}_{\delta_{1}}(\boldsymbol{\theta}_{1}). \end{displaymath}
Thus $\mathcal{H}^{t - \epsilon}_{\infty}(\mathcal{P}_{\theta}) \geq \mathcal{H}^{t}_{\infty}(\mathcal{P}_{\boldsymbol{\theta}_{1}})$ for all $t \in [1 + \epsilon,\tfrac{3}{2}]$, and $\mathcal{P}_{\theta} \subset \mathcal{D}_{\delta_{1}}(\mathcal{P}_{\boldsymbol{\theta}_{1}})$ for $\theta \in \mathcal{D}_{\delta_{1}}(\boldsymbol{\theta}_{1})$. This means that the sets $\mathcal{P}_{\theta}$ (for $\theta \in \mathcal{D}_{\delta_{1}}(\boldsymbol{\theta}_{1})$) satisfy the requirements \nref{i}-\nref{ii} in the statement of Lemma \ref{prop2}. The claim $\diam_{\theta}(\mathcal{P}_{\theta}) \leq \Delta$ follows from the inclusion $\mathcal{P}_{\theta} \subset \mathcal{D}_{\delta_{1}}(\cup \mathcal{R}_{\theta})$ (and $\delta_{1} \leq \Delta/2$), and recalling the key properties of the rectangles and tubes $\mathcal{R}_{\theta},\mathcal{T}_{\theta}$ from Proposition \ref{prop1}: each tube $T \in \mathcal{T}_{\theta}$ only contains one rectangle from $\mathcal{R}_{\theta}$, the tubes $T \in \mathcal{T}_{\theta}$ have slope $\theta$, and the tubes $T \in \mathcal{T}_{\theta}$ are $10\delta$-separated.

A small caveat is that "$\delta_{1}$" is not yet the final scale $\delta \in 2^{-\N} \cap (0,\delta_{0}]$ announced in the proposition, and we claimed in the statement to choose $\mathcal{P}_{\theta} \subset \mathcal{D}_{\delta}$. Once the final scale $\delta$ has been located (after altogether $m$ steps of the argument), we will re-define the sets $\mathcal{P}_{\theta}$ as follows: for each $\theta \in \mathcal{D}_{\delta_{1}}(\boldsymbol{\theta}_{1})$ and $\theta' \in \mathcal{D}_{\delta}(\theta)$, we set $\mathcal{P}_{\theta'} := \mathcal{D}_{\delta}(\mathcal{P}_{\theta})$. Then $\mathcal{H}^{t - \epsilon}_{\infty}(\mathcal{P}_{\theta'}) \geq \mathcal{H}^{t}_{\infty}(\mathcal{P}_{\boldsymbol{\theta}_{1}})$ for all $\theta' \in \mathcal{D}_{\delta}(\boldsymbol{\theta}_{1})$. We will next check that the pairs $\mathcal{P}_{\theta},\mathcal{F}_{1}$ are $\theta$-disconnected. This will clearly imply that the pairs $\mathcal{P}_{\theta'},\mathcal{F}_{1}$ are $\theta'$-disconnected. 

Let us check that $\mathcal{P}_{\theta},\mathcal{F}_{1}$ is $\theta$-disconnected for all $\theta \in \mathcal{D}_{\delta_{1}}(\boldsymbol{\theta}_{1})$. Indeed, fix $\theta \in \mathcal{D}_{\delta_{1}}(\boldsymbol{\theta}_{1})$, and let $\ell \subset \R^{2}$ be a line with $\sigma(\ell) \in \theta$. Now, if $[\ell]_{\delta_{1}/2}$ intersects $\cup \mathcal{P}_{\theta} \subset \cup \mathcal{D}_{\delta_{1}}(\cup \mathcal{T}_{\theta})$, then $[\ell]_{\delta_{1}/2} \subset \cup \{10T : T \in \mathcal{T}_{\theta}\}$. Therefore $[\ell]_{\delta_{1}/2}$ does not intersect $\cup \mathcal{F}_{1}$ by \nref{a}.

Next we deal with the arc $\boldsymbol{\theta}_{k}$ with $2 \leq k \leq m$. Assume that we have already constructed a decreasing sequence of scales $\delta_{1},\ldots,\delta_{k - 1} \in 2^{-\N} \cap (0,\Delta]$, and a nested sequence of families $\mathcal{F}_{i} \subset \mathcal{D}_{\delta_{i}}(\mathcal{F})$, $1 \leq i \leq k - 1$, satisfying 
\begin{equation}\label{form14} \mathcal{H}_{\infty}^{t - i\epsilon/m}(\mathcal{F}_{i}) \geq \mathcal{H}_{\infty}^{t}(\mathcal{F}), \qquad 1 \leq i \leq k - 1, \, t \in [1 + \epsilon,\tfrac{3}{2}]. \end{equation}
(By "nested" we mean that the squares in $\mathcal{F}_{i + 1}$ are contained in the union of the squares in $\mathcal{F}_{i}$.) The case $k = 2$ is true by \eqref{form13}. We also assume that for each $1 \leq i \leq k - 1$, and for each $\theta \in \mathcal{D}_{\delta_{i}}(\boldsymbol{\theta}_{i})$, we have constructed a family $\mathcal{P}_{\theta} \subset \mathcal{D}_{\delta_{i}}(\mathcal{P}_{\boldsymbol{\theta}_{i}})$ such that $\mathcal{P}_{\theta},\mathcal{F}_{k - 1}$ is $\theta$-disconnected. We verified this just above in the case $k = 2$.

We next construct the scale $\delta_{k} \in 2^{-\N} \cap (0,\delta_{k - 1}]$, the family $\mathcal{F}_{k} \subset \mathcal{D}_{\delta_{k}}(\mathcal{F}_{k - 1}) \subset \mathcal{D}_{\delta_{k}}(\mathcal{F})$, and the sets $\mathcal{P}_{\theta} \subset \mathcal{D}_{\delta_{k}}(\mathcal{P}_{\boldsymbol{\theta}_{k}})$ for $\theta \in \mathcal{D}_{\delta_{k}}(\boldsymbol{\theta}_{k})$. To do this, we apply Proposition \ref{prop1} with parameters $s = 3/2$, $\epsilon/m = \Delta \epsilon$, and scale $\delta_{k - 1}$. This gives a scale $\delta_{k} \in 2^{-\N} \cap (0,\delta_{1}]$, and a family $\mathfrak{P}_{k} \subset \mathcal{D}_{\delta_{k}}$ with the property 
\begin{equation}\label{form15} \mathcal{H}^{t - k \epsilon/m}(\mathcal{F}_{k - 1} \cap \mathfrak{P}_{k}) \geq \mathcal{H}^{t - (k - 1)\epsilon/m}(\mathcal{F}_{k - 1}) \stackrel{\eqref{form14}}{\geq} \mathcal{H}^{t}_{\infty}(\mathcal{F}), \quad t \in [1 + \epsilon,\tfrac{3}{2}]. \end{equation}
Define $\mathcal{F}_{k} := \mathcal{F}_{k - 1} \cap \mathfrak{P}_{k}$. Moreover, for each $\theta \in \mathcal{D}_{\delta_{k}}(\boldsymbol{\theta}_{k})$, Proposition \ref{prop1} gives a family $\mathcal{T}_{\theta}$ of $\delta_{k}$-tubes parallel to $\theta$, and associated $(\delta_{k} \times \delta_{k - 1})$-rectangles $\mathcal{R}_{\theta}$, such that
\begin{itemize}
\item[($a_{k}$)] $\cup \mathcal{F}_{k} \cap (\cup \{10T : T \in \mathcal{T}_{\theta}\}) = \emptyset$, and
\item[($b_{k}$)] $\mathcal{H}^{t - \epsilon}_{\infty}(\mathcal{P}_{\boldsymbol{\theta}_{k}} \cap \mathcal{D}_{\delta_{k}}(\cup \mathcal{R}_{\theta})) \geq \mathcal{H}^{t}_{\infty}(\mathcal{P}_{\boldsymbol{\theta}_{k}})$, for $\theta \in \mathcal{D}_{\delta_{k}}(\boldsymbol{\theta}_{k})$ and $t \in [1 + \epsilon,\tfrac{3}{2}]$.
\end{itemize}
We now define 
\begin{displaymath} \mathcal{P}_{\theta} := \mathcal{P}_{\boldsymbol{\theta}_{k}} \cap \mathcal{D}_{\delta_{k}}(\cup \mathcal{R}_{\theta}), \qquad \theta \in \mathcal{D}_{\delta_{k}}(\boldsymbol{\theta}_{k}). \end{displaymath}
Thus $\mathcal{P}_{\theta} \subset \mathcal{D}_{\delta_{k}}(\mathcal{P}_{\boldsymbol{\theta}_{k}})$ and $\mathcal{H}^{t - \epsilon}(\mathcal{P}_{\theta}) \geq \mathcal{H}^{t}_{\infty}(\mathcal{P}_{\boldsymbol{\theta}_{k}})$ for all $\theta \in \mathcal{D}_{\delta_{k}}(\boldsymbol{\theta}_{k})$ and $t \in [1 + \epsilon,\tfrac{3}{2}]$. We also note that $\diam_{\theta}(\mathcal{P}_{\theta}) \leq 2\delta_{k - 1} \leq \Delta$ for all $\theta \in \mathcal{D}_{\delta_{k}}(\boldsymbol{\theta}_{k})$.

To complete the induction, it remains to check that every pair $\mathcal{P}_{\theta},\mathcal{F}_{k}$ is $\theta$-disconnected for $\theta \in \mathcal{D}_{\delta_{i}}(\boldsymbol{\theta}_{i})$, $1 \leq i \leq k$. For $1 \leq i \leq k - 1$, this follows immediately from the $\theta$-disconnectedness of $\mathcal{P}_{\theta},\mathcal{F}_{k - 1}$, and $\cup \mathcal{F}_{k} \subset \cup \mathcal{F}_{k - 1}$. For $i = k$, the argument is the same as the argument we already recorded above for the $\theta$-disconnectedness of the pairs $\mathcal{P}_{\theta},\mathcal{F}_{1}$, with $\theta \in \mathcal{D}_{\delta_{1}}(\boldsymbol{\theta}_{1})$, now using ($a_{k}$) and $\mathcal{P}_{\theta} \subset \mathcal{D}_{\delta_{k}}(\cup \mathcal{T}_{\theta})$. We do not repeat the details.

We complete the proof by setting $\delta := \delta_{m}$, and $\mathcal{F}' := \mathcal{F}_{m}$. Then $\mathcal{H}^{t - \epsilon}(\mathcal{F}') \geq \mathcal{H}^{t}_{\infty}(\mathcal{F})$ for all $t \in [1 + \epsilon,\tfrac{3}{2}]$ by \eqref{form15} (with $k = m$). As already mentioned when dealing with the case $k = 1$, we finally have to redefine all the sets $\mathcal{P}_{\theta} \subset \mathcal{D}_{\delta_{k}}(\boldsymbol{\theta}_{k})$, $1 \leq k \leq m$, in a trivial way, to comply with the requirement that they are all subsets of $\mathcal{D}_{\delta}$, and indexed by $\mathcal{D}_{\delta}([0,1))$. 

For $1 \leq k \leq m$, $\theta \in \mathcal{D}_{\delta_{k}}(\boldsymbol{\theta}_{k})$, and $\theta' \in \mathcal{D}_{\delta}(\theta)$, we set $\mathcal{P}_{\theta'} := \mathcal{D}_{\delta}(\mathcal{P}_{\theta})$. With this definition $\mathcal{P}_{\theta'} \subset \mathcal{D}_{\delta}(\mathcal{P}_{\boldsymbol{\theta}})$ and $\mathcal{H}^{t - \epsilon}_{\infty}(\mathcal{P}_{\theta'}) \geq \mathcal{H}^{t}_{\infty}(\mathcal{P}_{\boldsymbol{\theta}})$ for all $\boldsymbol{\theta} \in \mathcal{D}_{\Delta}([0,1))$, all $\theta' \in \mathcal{D}_{\delta}([0,1))$, and all $t \in [1 + \epsilon,\tfrac{3}{2}]$. The $\theta'$-disconnectedness of the pairs $\mathcal{P}_{\theta'},\mathcal{F}'$ follows from the $\theta$-disconnectedness of the pairs $\mathcal{P}_{\theta},\mathcal{F}'$. The bound $\diam_{\theta'}(\mathcal{P}_{\theta'}) \leq \Delta$ is inherited from $\diam_{\theta}(\mathcal{P}_{\theta}) \leq \Delta$. \end{proof} 
 
 We are then equipped to prove Theorem \ref{thm1}.
 
\begin{proof}[Proof of Theorem \ref{thm1}] Recall that $\Delta \in 2^{-\N}$. We may assume that $\Delta \in (0,\tfrac{1}{2}]$. Namely, if $\Delta = 1$ then we are given a single family $\mathcal{P}_{[0,1)} \subset \mathcal{D}_{1}$, thus $\mathcal{P}_{[0,1)} = \{[0,1)^{2}\}$. We then define $\delta := 1/2$ and $\mathcal{P}_{[0,1/2)} := \mathcal{D}_{1/2} =: \mathcal{P}_{[1/2,1)}$. The reader may check that all properties \nref{A}-\nref{D} are satisfies (\nref{D} is vacuous, because $\mathcal{D}_{1}([0,1))$ contains a single interval).

We then assume that $\Delta \in (0,\tfrac{1}{2}]$. Let us enumerate 
\begin{displaymath} \mathcal{D}_{\Delta}([0,1)) =: \{\boldsymbol{\theta}_{1},\ldots,\boldsymbol{\theta}_{m}\} \end{displaymath}
with $m = \Delta^{-1} \geq 2$. The proof consists of $m$ applications of Lemma \ref{prop2}. At step $k \in \{1,\ldots,m\}$, "the sets associated to $\boldsymbol{\theta}_{k}$ are disconnected from the sets associated to all the other arcs $\boldsymbol{\theta}_{j}$ with $j \neq k$." Naturally the sentence above does not make full sense at the moment, since the "sets associated to $\boldsymbol{\theta}_{j}$" are not properly defined. 

Write $\delta_{0} := \Delta$. Fix $0 \leq k < m$, and assume that the following objects have been constructed. First, a decreasing sequence of scales $\delta_{0},\delta_{1},\ldots,\delta_{k} \in 2^{-\N} \cap (0,\Delta]$. 

Second, for each $0 \leq j \leq k$ and $\theta \in \mathcal{D}_{\delta_{j}}([0,1))$, a family $\mathcal{P}_{\theta} \subset \mathcal{D}_{\delta_{j}}$ as follows:
\begin{itemize}
\item[(I1) \phantomsection \label{I1}] If $0 \leq i \leq j$, $\theta_{i} \in \mathcal{D}_{\delta_{i}}([0,1))$ and $\theta_{j} \in \mathcal{D}_{\delta_{j}}(\theta_{i})$, then $\mathcal{P}_{\theta_{j}} \subset \mathcal{D}_{\delta_{j}}(\mathcal{P}_{\theta_{i}})$. 
\item[(I2) \phantomsection \label{I2}] If $\theta \in \mathcal{D}_{\delta_{k}}(\boldsymbol{\theta})$ for some $\boldsymbol{\theta} \in \mathcal{D}_{\Delta}([0,1))$, then
\begin{displaymath} \mathcal{H}^{t - k\epsilon/m}_{\infty}(\mathcal{P}_{\theta}) \geq \mathcal{H}^{t}_{\infty}(\mathcal{P}_{\boldsymbol{\theta}}), \qquad t \in [1 + \epsilon,\tfrac{3}{2}]. \end{displaymath}
\item[(I3) \phantomsection \label{I3}] If $1 \leq i \leq m$ and $j \in \{1,\ldots,k\} \, \setminus \, \{i\}$, then the following holds. Let $\theta_{i} \in \mathcal{D}_{\delta_{k}}(\boldsymbol{\theta}_{i})$ and $\theta_{j} \in \mathcal{D}_{\delta_{k}}(\boldsymbol{\theta}_{j})$. Then the pair $\mathcal{P}_{\theta_{i}},\mathcal{P}_{\theta_{j}}$ is $\theta_{i}$-disconnected.
\item[(I4)\phantomsection \label{I4}] If $1 \leq k \leq m$ and $j \in \{1,\ldots,m\} \, \setminus \, \{k\}$, then $\diam_{\theta}(\mathcal{P}_{\theta}) \leq \Delta$ for all $\theta \in \mathcal{D}_{\delta_{k}}(\boldsymbol{\theta}_{j})$.
\end{itemize}
Note that conditions \nref{I1}-\nref{I2} with $k = 0$ are trivially satisfied by the families $\mathcal{P}_{\boldsymbol{\theta}_{1}},\ldots,\mathcal{P}_{\boldsymbol{\theta}_{m}}$ provided to us by the statement of the proposition, while \nref{I3}-\nref{I4} are vacuous for $k = 0$.

We will next construct the scale $\delta_{k + 1}$ and the families $\mathcal{P}_{\theta} \subset \mathcal{D}_{\delta_{k + 1}}$, $\theta \in \mathcal{D}_{\delta_{k + 1}}([0,1))$, by applying Lemma \ref{prop2} with parameters $\delta_{k} \in 2^{-\N} \cap (0,\Delta]$, $\epsilon/m$, and $M \in \N$ to be determined in a moment. The families $\mathcal{P}_{\theta} \subset \mathcal{D}_{\delta_{k}}$, $\theta \in \mathcal{D}_{\delta_{k}}([0,1))$, to which Lemma \ref{prop2} is applied are the ones provided by the inductive hypothesis. The family $\mathcal{M}$ is defined by
\begin{displaymath} \mathcal{M} := \{\mathcal{P}_{\theta_{k}} : \theta_{k} \in \mathcal{D}_{\delta_{k}}(\boldsymbol{\theta}_{k + 1})\}. \end{displaymath}
Set $M := |\mathcal{M}|$. The results of the application of Lemma \ref{prop2} are the following. We are provided with a scale $\delta_{k + 1} = \delta_{k + 1}(\delta_{k},\epsilon/m,M) \in (0,\tfrac{1}{2}\delta_{k}]$. For each $\theta \in \mathcal{D}_{\delta_{k + 1}}([0,1))$ we are provided with a family $\overline{\mathcal{P}}_{\theta} \subset \mathcal{D}_{\delta_{k + 1}}$ with the following properties (we use the overline symbol to signify that the sets $\overline{\mathcal{P}}_{\theta}$ are not the "final" sets of generation $k + 1$):
\begin{itemize}
\item[(a)] If $\theta_{k} \in \mathcal{D}_{\delta_{k}}([0,1))$ and $\theta \in \mathcal{D}_{\delta_{k + 1}}(\theta_{k})$, then $\overline{\mathcal{P}}_{\theta} \subset \mathcal{D}_{\delta_{k + 1}}(\mathcal{P}_{\theta_{k}})$. 
\item[(b)] $\diam_{\theta}(\overline{\mathcal{P}}_{\theta}) \leq \delta_{k} \leq \Delta$ for all $\theta \in \mathcal{D}_{\delta_{k + 1}}([0,1))$.
\item[(c)] If $\theta_{k} \in \mathcal{D}_{\delta_{k}}([0,1))$ and $\theta \in \mathcal{D}_{\delta_{k + 1}}(\theta_{k})$, then
\begin{displaymath} \mathcal{H}^{t - (k + 1)\epsilon/m}_{\infty}(\overline{\mathcal{P}}_{\theta}) \geq \mathcal{H}^{t - k\epsilon/m}(\mathcal{P}_{\theta_{k}}), \qquad t \in [1 + \epsilon,\tfrac{3}{2}]. \end{displaymath}
\end{itemize}
Moreover, for each $\mathcal{P}_{\theta_{k}} \in \mathcal{M}$, $\theta_{k} \in \mathcal{D}_{\delta_{k}}(\boldsymbol{\theta}_{k + 1})$, we are provided with a family $\mathcal{P}_{\theta_{k}}' \subset \mathcal{D}_{\delta_{k + 1}}(\mathcal{P}_{\theta_{k}})$ such that 
\begin{displaymath} \mathcal{H}^{t - (k + 1)\epsilon/m}_{\infty}(\mathcal{P}_{\theta_{k}}') \geq \mathcal{H}^{t - k\epsilon/m}_{\infty}(\mathcal{P}_{\theta_{k}}), \qquad t \in [1 + \epsilon,\tfrac{3}{2}], \end{displaymath}
and such that 
\begin{equation}\label{form18} \text{every pair $\overline{\mathcal{P}}_{\theta},\mathcal{P}_{\theta_{k}}'$ is $\theta$-disconnected, for $\theta \in \mathcal{D}_{\delta_{k + 1}}([0,1))$ and $\theta_{k} \in \mathcal{D}_{\delta_{k}}(\boldsymbol{\theta}_{k + 1})$.} \end{equation}

We are now prepared to define the sets $\mathcal{P}_{\theta}$, $\theta \in \mathcal{D}_{\delta_{k + 1}}([0,1))$, of generation $k + 1$. Set
\begin{equation}\label{form17} \mathcal{P}_{\theta} := \begin{cases} \overline{\mathcal{P}}_{\theta}, & \theta \in \mathcal{D}_{\delta_{k + 1}}([0,1)) \, \setminus \, \mathcal{D}_{\delta_{k + 1}}(\boldsymbol{\theta}_{k + 1}), \\ \mathcal{P}_{\theta_{k}}', & \theta \in \mathcal{D}_{\delta_{k + 1}}(\theta_{k}), \, \theta_{k} \in \mathcal{D}_{\delta_{k}}(\boldsymbol{\theta}_{k + 1}). \end{cases} \end{equation} 

We then verify that these sets $\mathcal{P}_{\theta}$ satisfy \nref{I1}-\nref{I4}. To verify \nref{I1}, fix $0 \leq i \leq j \leq k + 1$. If $j \leq k$, the property \nref{I1} follows from induction, and the case $i = k + 1 = j$ is trivial, so we only need to consider pairs $(i,k + 1)$ with $i < k$. If $\theta_{i} \in \mathcal{D}_{\delta_{i}}([0,1))$ and $\theta \in \mathcal{D}_{\delta_{k + 1}}(\theta_{i})$, we need to prove that $\mathcal{P}_{\theta} \subset \mathcal{D}_{\delta_{k + 1}}(\mathcal{P}_{\theta_{i}})$. In both cases of the definition of $\mathcal{P}_{\theta}$, the following is true: if $\theta_{k} \in \mathcal{D}_{\delta_{k}}([0,1))$ is the dyadic parent of $\theta$, then $\mathcal{P}_{\theta} \subset \mathcal{D}_{\delta_{k + 1}}(\mathcal{P}_{\theta_{k}})$. Now the claim follows by applying the inductive hypothesis to the pair $(i,k)$.

Next, we verify property \nref{I2}. Fix $\theta \in \mathcal{D}_{\delta_{k + 1}}(\boldsymbol{\theta})$ with $\boldsymbol{\theta} \in \mathcal{D}_{\Delta}([0,1))$. Let $\theta_{k} \in \mathcal{D}_{\delta_{k}}(\boldsymbol{\theta})$ be the dyadic parent of $\theta$. In both cases of the definition of $\mathcal{P}_{\theta}$, it holds 
\begin{displaymath} \mathcal{H}^{t - (k + 1)\epsilon/m}_{\infty}(\mathcal{P}_{\theta}) \geq \mathcal{H}_{\infty}^{t - k\epsilon/m}(\mathcal{P}_{\theta_{k}}) \geq \mathcal{H}^{t}_{\infty}(\mathcal{P}_{\boldsymbol{\theta}}), \qquad t \in [1 + \epsilon,\tfrac{3}{2}], \end{displaymath}
where the second inequality uses \nref{I2} inductively. This verifies \nref{I2}.

We move on to property \nref{I3}. Fix $1 \leq i \leq m$ and $j \in \{1,\ldots,k + 1\} \, \setminus \, \{i\}$, and let $\theta_{i} \in \mathcal{D}_{\delta_{k + 1}}(\boldsymbol{\theta}_{i})$ and $\theta_{j} \in \mathcal{D}_{\delta_{k + 1}}(\boldsymbol{\theta}_{j})$. The claim is that $\mathcal{P}_{\theta_{i}},\mathcal{P}_{\theta_{j}}$ is $\theta_{i}$-disconnected. 

Consider first the case where $j \leq k$. Let $\bar{\theta}_{i} \in \mathcal{D}_{\delta_{k}}(\boldsymbol{\theta}_{i})$ and $\bar{\theta}_{j} \in \mathcal{D}_{\delta_{k}}(\boldsymbol{\theta}_{j})$ be the dyadic parents of $\theta_{i},\theta_{j}$. Since $j \in \{1,\ldots,k\} \, \setminus \, \{i\}$, our inductive hypothesis \nref{I3} guarantees that the pair $\mathcal{P}_{\bar{\theta}_{i}},\mathcal{P}_{\bar{\theta}_{j}}$ is $\bar{\theta}_{i}$-disconnected. In particular, if $\ell \subset \R^{2}$ is a line with $\sigma(\ell) \in \theta_{i} \subset \bar{\theta}_{i}$, then $[\ell]_{\delta_{k}/2}$ does not intersect both $\cup \mathcal{P}_{\bar{\theta}_{i}},\cup \mathcal{P}_{\bar{\theta}_{j}}$. Since $\cup \mathcal{P}_{\theta_{i}} \subset \cup \mathcal{P}_{\bar{\theta}_{i}}$ and $\cup \mathcal{P}_{\theta_{j}} \subset \cup \mathcal{P}_{\bar{\theta}_{j}}$, we infer that $[\ell]_{\delta_{k}/2}$ also does not intersect both $\cup \mathcal{P}_{\theta_{i}},\cup \mathcal{P}_{\theta_{j}}$. Thus $\mathcal{P}_{\theta_{i}},\mathcal{P}_{\theta_{j}}$ is $\theta_{i}$-disconnected.

Consider finally the case $j = k + 1$, thus $i \in \{1,\ldots,m\} \, \setminus \, \{k + 1\}$. By the definition \eqref{form17} of the sets $\mathcal{P}_{\theta}$,
\begin{displaymath} \mathcal{P}_{\theta_{i}} = \overline{\mathcal{P}}_{\theta_{i}} \quad \text{and} \quad \mathcal{P}_{\theta_{j}} = \mathcal{P}_{\theta_{k}}' \subset \mathcal{D}_{\delta_{k + 1}}(\mathcal{P}_{\theta_{k}}), \end{displaymath} 
where $\theta_{k} \in \mathcal{D}_{\delta_{k}}(\boldsymbol{\theta}_{k + 1})$ is the dyadic parent of $\theta_{j}$. But now the pair $\overline{\mathcal{P}}_{\theta_{i}},\mathcal{P}_{\theta_{k}}'$ if $\theta_{i}$-disconnected by construction, see \eqref{form18}. Thus $\mathcal{P}_{\theta_{i}},\mathcal{P}_{\theta_{j}}$ is $\theta_{i}$-disconnected.

Finally, we verify property \nref{I4}. Fix $j \in \{1,\ldots,m\} \, \setminus \, \{k + 1\}$ and $\theta \in \mathcal{D}_{\delta_{k + 1}}(\boldsymbol{\theta}_{j})$. In this case $\mathcal{P}_{\theta}$ coincides with $\overline{\mathcal{P}}_{\theta}$ by \eqref{form17}, and property (b) yields $\diam_{\theta}(\mathcal{P}_{\theta}) \leq \Delta$.

We have now verified all the properties \nref{I1}-\nref{I4} for the scale $\delta_{k + 1}$ and the families $\mathcal{P}_{\theta} \subset \mathcal{D}_{\delta_{k + 1}}$ with $\theta \in \mathcal{D}_{\delta_{k + 1}}([0,1))$. We complete the proof of Theorem \ref{thm1} as follows. Let $\delta := \delta_{m} \in 2^{-\N} \cap (0,\Delta]$, and let $\mathcal{P}_{\theta} \subset \mathcal{D}_{\delta}$, $\theta \in \mathcal{D}_{\delta}([0,1))$, be the scale, and the families, obtained at the (final) step $m$ of the construction. We claim that these objects satisfy the claims \nref{A}-\nref{D} of Theorem \ref{thm1}. For \nref{A}, if $\boldsymbol{\theta} \in \mathcal{D}_{\Delta}([0,1))$ and $\theta \in \mathcal{D}_{\delta}(\boldsymbol{\theta})$, the inclusion $\mathcal{P}_{\theta} \subset \mathcal{D}_{\delta}(\mathcal{P}_{\boldsymbol{\theta}})$ follows from the "nestedness" property \nref{I1} applied with $i = 0$ and $j = m$.

For \nref{B}, if $\theta \in \mathcal{D}_{\delta}([0,1)) \, \setminus \, \mathcal{D}_{\delta}(\boldsymbol{\theta}_{m})$, the diameter bound $\diam_{\theta}(\mathcal{P}_{\theta}) \leq \Delta$ follows directly from property \nref{I4}. Consider then the case $\theta \in \mathcal{D}_{\delta}(\boldsymbol{\theta}_{m})$. Let $\bar{\theta} \in \mathcal{D}_{\delta_{m - 1}}(\boldsymbol{\theta}_{m})$ be the dyadic parent of $\theta$. Then $m \in \{1,\ldots,m\} \, \setminus \, \{m - 1\}$, so \nref{I4} applied with $(j,k) = (m,m - 1)$ shows that $\diam_{\bar{\theta}}(\mathcal{P}_{\bar{\theta}}) \leq \Delta$. Now $\diam_{\theta}(\mathcal{P}_{\theta}) \leq \diam_{\theta}(\mathcal{P}_{\bar{\theta}}) \leq \Delta$ by $\theta \subset \bar{\theta}$ and $\mathcal{P}_{\theta} \subset \mathcal{P}_{\bar{\theta}}$.

For \nref{C}, the lower bound $\mathcal{H}^{t - \epsilon}_{\infty}(\mathcal{P}_{\theta}) \geq \mathcal{H}^{t}_{\infty}(\mathcal{P}_{\boldsymbol{\theta}})$ for $\boldsymbol{\theta} \in \mathcal{D}_{\Delta}([0,1))$, $\theta \in \mathcal{D}_{\delta}(\boldsymbol{\theta})$, and $t \in [1 + \epsilon,\tfrac{3}{2}]$ follows from \nref{I2} applied with $k = m$. 

For \nref{D}, we need to check that if $\boldsymbol{\theta}_{i},\boldsymbol{\theta}_{j} \in \mathcal{D}_{\Delta}([0,1))$ with $i,j \in \{1,\ldots,m\}$, $i \neq j$, and $\theta_{i} \in \mathcal{D}_{\delta}(\boldsymbol{\theta}_{i})$ and $\theta_{j} \in \mathcal{D}_{\delta}(\boldsymbol{\theta}_{j})$, then $\mathcal{P}_{\theta_{i}},\mathcal{P}_{\theta_{j}}$ is $\theta_{i}$-disconnected and $\theta_{j}$-disconnected. This follows directly from \nref{I3} applied with $k = m$, and to both pairs $(i,j)$ and $(j,i)$. \end{proof}

\section{Proof of Theorem \ref{thm1}}\label{s3}

We now construct the compact set $K \subset \R^{2}$ announced in Theorem \ref{thm1}. Fix $\epsilon > 0$. We will first construct a compact set $K \subset \R^{2}$ such that $\vis^{2}_{\sigma}(K) \geq t - \epsilon$ for all $\sigma \in [0,1)$. At the end of the argument, we will explain the (standard but notationally cumbersome) edits required to get rid of the "$\epsilon$".

Write $\epsilon_{0} := 0$, and set
\begin{displaymath} \epsilon_{j} := \epsilon \cdot (\tfrac{1}{2} + \ldots + 2^{-j}) \in (0,\epsilon), \qquad j \geq 1. \end{displaymath}
We will recursively construct a sequence of dyadic scales $\delta_{0} > \delta_{1} > \delta_{2} > \ldots > 0$ and associated families $\{\mathcal{P}_{\theta} \subset \mathcal{D}_{\delta_{k}} : \theta \in \mathcal{D}_{\delta_{k}}([0,1))\}$. Set $\delta_{0} := 1$ and write $\mathcal{P}_{[0,1)} := \{[0,1)^{2}\}$. Note that 
\begin{displaymath} \mathcal{H}^{3/2}_{\infty}(\mathcal{P}_{\theta}) = \mathcal{H}^{3/2}_{\infty}([0,1)^{2}) = 1, \qquad \theta \in \mathcal{D}_{\delta_{0}}([0,1)). \end{displaymath}

Assume next that the scales $\delta_{0} > \ldots > \delta_{k}$ have already been constructed for some $k \geq 0$. Assume also that to every $j \in \{0,\ldots,k\}$ we have associated a family of the form $\{\mathcal{P}_{\theta} \subset \mathcal{D}_{\delta_{j}} : \theta \in \mathcal{D}_{\delta_{j}}([0,1))\}$, such that the following properties are valid:
\begin{itemize}
\item[(J1) \phantomsection \label{J1}] Let $k \geq 1$. If $\boldsymbol{\theta} \in \mathcal{D}_{\delta_{k - 1}}([0,1))$ and $\theta \in \mathcal{D}_{\delta_{k}}(\boldsymbol{\theta})$, then $\mathcal{P}_{\theta} \subset \mathcal{D}_{\delta_{k}}(\mathcal{P}_{\boldsymbol{\theta}})$.
\item[(J2) \phantomsection \label{J2}] Let $k \geq 1$. Then $\diam_{\theta}(\mathcal{P}_{\theta}) \leq \delta_{k - 1}$ for $\theta \in \mathcal{D}_{\delta_{k}}([0,1))$.
\item[(J3) \phantomsection \label{J3}] Let $k \geq 0$. Then $\mathcal{H}^{3/2 - \epsilon_{k}}_{\infty}(\mathcal{P}_{\theta}) \geq 1$ for all $\theta \in \mathcal{D}_{\delta_{k}}([0,1))$.
\item[(J4) \phantomsection \label{J4}] Let $k \geq 1$, $\boldsymbol{\theta},\boldsymbol{\theta}' \in \mathcal{D}_{\delta_{k - 1}}([0,1))$ with $\boldsymbol{\theta} \neq \boldsymbol{\theta}'$, and $\theta \in \mathcal{D}_{\delta_{k}}(\boldsymbol{\theta})$ and $\theta' \in \mathcal{D}_{\delta_{k}}(\boldsymbol{\theta}')$. Then the pair $\mathcal{P}_{\theta},\mathcal{P}_{\theta'}$ is $\theta$-disconnected and $\theta'$-disconnected.
\end{itemize}
Only \nref{J3} says something non-vacuous for $k = 0$, and that condition with $\epsilon_{0} = 0$ is satisfied by the initial families $\mathcal{P}_{\theta} = \{p_{\theta}\}$, $\theta \in \mathcal{D}_{\delta_{0}}([0,1))$.

We next construct the scale $\delta_{k + 1} \in 2^{-\N} \cap (0,\tfrac{1}{2}\delta_{k}]$ and the family $\{\mathcal{P}_{\theta} \subset \mathcal{D}_{\delta_{k + 1}} : \theta \in \mathcal{D}_{\delta_{k + 1}}([0,1))\}$ by applying Theorem \ref{thm1} with parameters 
\begin{displaymath} \Delta := \delta_{k} \quad \text{and} \quad \epsilon \cdot 2^{-k - 1}. \end{displaymath}
The properties \nref{J1}-\nref{J4} are nothing but restatements of Theorem \ref{thm1}\nref{A}-\nref{D}, but let us say a few words as a reminder. 

The nestedness property \nref{J1} is immediate from Theorem \ref{thm1}\nref{A}. Regarding \nref{J2}, Theorem \ref{thm1}\nref{B} tells us that $\diam_{\theta}(\mathcal{P}_{\theta}) \leq \Delta = \delta_{k}$ for $\theta \in \mathcal{D}_{\delta_{k + 1}}([0,1))$. Regarding \nref{J3}, noting that $\epsilon_{k + 1} = \epsilon_{k} + \epsilon \cdot 2^{-k - 1}$, Theorem \ref{thm1}\nref{C} tells us that
\begin{displaymath} \mathcal{H}_{\infty}^{3/2 - \epsilon_{k + 1}}(\mathcal{P}_{\theta}) = \mathcal{H}_{\infty}^{3/2 - \epsilon_{k} - \epsilon \cdot 2^{-k - 1}}(\mathcal{P}_{\theta}) \stackrel{\textup{\nref{C}}}{\geq} \mathcal{H}^{3/2 - \epsilon_{k}}(\mathcal{P}_{k}) \geq 1, \qquad \theta \in \mathcal{D}_{\delta_{k + 1}}([0,1)). \end{displaymath}
Finally, to verify \nref{J4}, fix distinct $\boldsymbol{\theta},\boldsymbol{\theta}' \in \mathcal{D}_{\delta_{k}}([0,1))$, and $\theta \in \mathcal{D}_{\delta_{k + 1}}(\boldsymbol{\theta})$ and $\theta' \in \mathcal{D}_{\delta_{k + 1}}(\boldsymbol{\theta}')$. Since we applied Theorem \ref{thm1} with $\Delta = \delta_{k}$, Theorem \ref{thm1}\nref{D} now directly tells us that $\mathcal{P}_{\theta},\mathcal{P}_{\theta'}$ is $\theta$-disconnected and $\theta'$-disconnected.

Now that the objects $\{\delta_{k}\}_{k \in \N}$ and the associated families $\{\mathcal{P}_{\theta} \subset \mathcal{D}_{\delta_{k}} : \theta \in \mathcal{D}_{\delta_{k}}([0,1))\}_{k \in \N}$ have been constructed, we use them to build a compact set $K \subset [0,1]^{2}$ with large bi-visible parts. For $k \in \N$, write
\begin{displaymath} K_{k} := \bigcup_{\theta \in \mathcal{D}_{\delta_{k}}([0,1))} \overline{\cup \mathcal{P}_{\theta}} \subset [0,1]^{2}. \end{displaymath}
Evidently each $K_{k}$ is compact. We claim that $K_{k + 1} \subset K_{k}$ for all $k \in \N$. This follows from \nref{J1}: if $\theta \in \mathcal{D}_{\delta_{k + 1}}([0,1))$, then $\mathcal{P}_{\theta} \subset \mathcal{D}_{\delta_{k + 1}}(\mathcal{P}_{\boldsymbol{\theta}})$, where $\boldsymbol{\theta} \in \mathcal{D}_{\delta_{k}}([0,1))$ is the dyadic parent of $\theta$. Consequently 
\begin{displaymath} \overline{\cup \mathcal{P}_{\theta}} \subset \overline{\cup \mathcal{P}_{\boldsymbol{\theta}}} \subset K_{k}, \qquad \theta \in \mathcal{D}_{\delta_{k + 1}}([0,1)), \end{displaymath}
which gives $K_{k + 1} \subset K_{k}$. We now define the compact non-empty set 
\begin{displaymath} K := \bigcap_{k \geq 0} K_{k} \subset [0,1]^{2}. \end{displaymath}
Next, we define a compact subset $K(\sigma) \subset K$ for each $\sigma \in [0,1)$. Fix $\sigma \in [0,1)$, and let $\{\theta_{k}(\sigma)\}_{k \in \N}$ be the sequence of dyadic arcs satisfying $\sigma \in \theta_{k}(\sigma) \in \mathcal{D}_{\delta_{k}}([0,1))$, $k \in \N$. Define
\begin{equation}\label{form20} K_{k}(\sigma) := \overline{\cup \mathcal{P}_{\theta_{k}(\sigma)}}, \qquad \sigma \in [0,1), \, k \in \N. \end{equation}
Then $K_{k + 1}(\sigma) \subset K_{k}(\sigma)$ by \nref{J1} (same argument that proved $K_{k + 1} \subset K_{k}$). Therefore
\begin{displaymath} K(\sigma) := \bigcap_{k \geq 0} K_{k}(\sigma), \qquad \sigma \in [0,1), \end{displaymath}
is a non-empty compact set. Furthermore $K(\sigma) \subset K$, since $K_{k}(\sigma) \subset K_{k}$ for all $k \geq 0$.

Next, we claim that $K(\sigma) \subset \vis^{2}_{\sigma}(K)$ for all $\sigma \in [0,1)$. Fix $\sigma \in [0,1)$ and $z \in K(\sigma)$. Let $\ell := \ell_{z,\sigma}$ (the line with slope $\sigma$ through $z$). We claim that 
\begin{equation}\label{form19} \diam(\ell \cap K) \leq \diam(\ell \cap K_{k}) \leq \delta_{k - 2}, \qquad k \geq 2. \end{equation}
(The first inequality is trivial.) The second inequality will show, letting $k \to \infty$, that $\diam(\ell \cap K) = 0$, verifying that $z \in \vis_{\sigma}^{2}(K)$.

To prove \eqref{form19}, fix $k \geq 2$, and let $\boldsymbol{\theta} \in \mathcal{D}_{\delta_{k - 1}}([0,1))$ and $\theta \in \mathcal{D}_{\delta_{k}}([0,1))$ be the $\delta_{k - 1}$-arc and $\delta_{k}$-arc containing $\sigma$, respectively. Then
\begin{equation}\label{form21} z \in K(\sigma) \subset K_{k}(\sigma) \stackrel{\eqref{form20}}{=} \overline{\cup \mathcal{P}_{\theta}}. \end{equation}
Now comes a central observation. Let $\theta' \in \mathcal{D}_{\delta_{k}}([0,1)) \, \setminus \, \mathcal{D}_{\delta_{k}}(\boldsymbol{\theta})$. Then $\mathcal{P}_{\theta},\mathcal{P}_{\theta'}$ is $\theta$-disconnected by \nref{J4}. Since $\ell$ is a line with slope in $\theta$, and $[\ell]_{\delta_{k}/2}$ intersects $\cup \mathcal{P}_{\theta}$ by \eqref{form21}, we see that $[\ell]_{\delta_{k}/2} \cap (\cup \mathcal{P}_{\theta'}) = \emptyset$. In particular
\begin{displaymath} \ell \cap \left(\overline{\cup \mathcal{P}_{\theta'}}\right) = \emptyset, \qquad \theta' \in \mathcal{D}_{\delta_{k}}([0,1)) \, \setminus \, \mathcal{D}_{\delta}(\boldsymbol{\theta}). \end{displaymath}
Therefore
\begin{displaymath} K_{k} \cap \ell \subset \bigcup_{\theta' \in \mathcal{D}_{\delta_{k}}(\boldsymbol{\theta})} \overline{\cup \mathcal{P}_{\theta'}} \, \stackrel{\textup{\nref{J1}}}{\subset} \, \overline{\cup \mathcal{P}_{\boldsymbol{\theta}}}. \end{displaymath}
Recall that $\diam_{\boldsymbol{\theta}}(\mathcal{P}_{\boldsymbol{\theta}}) \leq \delta_{k - 2}$ by \nref{J2}. Since $\ell$ is a line with slope $\sigma(\ell) \in \boldsymbol{\theta}$, 
\begin{displaymath} \diam(\overline{\cup \mathcal{P}_{\boldsymbol{\theta}}} \cap \ell) \leq \diam(\cup \mathcal{P}_{\boldsymbol{\theta}} \cap [\ell]_{\delta_{k - 1}/2}) \leq \delta_{k - 2}. \end{displaymath}
Combining the previous two displayed equations proves \eqref{form19}.

We have now shown that $K(\sigma) \subset \vis^{2}_{\sigma}(K)$ for all $\sigma \in [0,1)$. It remains to prove that $\Hd K(\sigma) \geq \tfrac{3}{2} - \epsilon$ for all $\sigma \in [0,1)$. Let $\{U_{i}\}_{i \in \N}$ be a cover of $K(\sigma)$ by bounded open sets. Since $K(\sigma)$ is compact,
\begin{displaymath} \dist\Big(K(\sigma),\R^{2} \, \setminus \, \bigcup_{i \in \N} U_{i} \Big) =: \eta > 0. \end{displaymath}
 Therefore $K_{k}(\sigma) \subset \cup \{U_{i}\}_{i \in \N}$ for all $k \in \N$ so large that $\delta_{k} \leq \eta/2$. For any such $k \in \N$,
\begin{equation}\label{form22} \sum_{i \in \N} \diam(U_{i})^{3/2 - \epsilon} \gtrsim \mathcal{H}^{3/2 - \epsilon}_{\infty}(K_{k}(\sigma)) \geq \mathcal{H}_{\infty}^{3/2 - \epsilon}(\mathcal{P}_{\theta_{k}(\sigma)}) \stackrel{\textup{\nref{J3}}}{\geq} 1.\end{equation} 
Therefore $\mathcal{H}^{3/2 - \epsilon}(K(\sigma)) > 0$, and $\Hd K(\sigma) \geq 3/2 - \epsilon$ for all $\sigma \in [0,1)$.

Let us finally explain how the construction is modified to get the stronger conclusion $\Hd \vis^{2}_{\sigma}(K) \geq 3/2$ for all $\sigma \in [0,1)$. Recall the construction of the sequences $\{\delta_{k}\}_{k \in \N}$ and the families $\{\mathcal{P}_{\theta} : \theta \in \mathcal{D}_{\delta_{k}}([0,1))\}_{k \in \N}$. At an arbitrary stage $k(1) \in \N$, it holds $\mathcal{H}^{3/2}_{\infty}(\mathcal{P}_{\theta}) \geq c_{1} > 0$ for all $\theta \in \mathcal{D}_{\delta_{k(1)}}([0,1))$, where $c_{1}$ is the $(3/2)$-dimensional (dyadic) Hausdorff content of a single $\delta_{k(1)}$-square. From this point on, we may continue the construction in such a way that simultaneously 
\begin{displaymath} \mathcal{H}^{3/2 - \epsilon_{k}}_{\infty}(\mathcal{P}_{\theta'}) \geq 1 \quad \text{and} \quad \mathcal{H}^{3/2 - \epsilon_{k}/2}_{\infty}(\mathcal{P}_{\theta'}) \geq c_{1}, \qquad \theta' \in \mathcal{D}_{\delta_{k}}([0,1)), \, k \geq k(1). \end{displaymath}
(This simultaneous control is possible because Theorem \ref{thm1}\nref{C} holds for all $t \in [1 + \epsilon,\tfrac{3}{2}]$.) We repeat this trick countably many times during the construction. For every $n \in \N$, we designate an index $k(n) \in \N$, and a (decreasing) sequence of lower bounds $c_{1}\ldots,c_{n}> 0$ such that 
\begin{displaymath} \mathcal{H}_{\infty}^{3/2 - \epsilon_{k}}(\mathcal{P}_{\theta}) \geq 1, \, \mathcal{H}_{\infty}^{3/2 - \epsilon_{k}/2}(\mathcal{P}_{\theta}) \geq c_{1}, \quad \ldots \quad \mathcal{H}_{\infty}^{3/2 - \epsilon_{k}/2^{n}}(\mathcal{P}_{\theta}) \geq c_{n}, \end{displaymath}
for all $\theta \in \mathcal{D}_{\delta_{k}}([0,1))$ with $k \geq k(n)$. Finally the estimate \eqref{form22} will show that $\mathcal{H}^{t - \epsilon/2^{n}}(K(\sigma)) \gtrsim c_{n}$ for every $n \in \N$, and $\sigma \in [0,1)$. Therefore $\Hd K(\sigma) \geq 3/2$ for all $\sigma \in [0,1)$. 

\appendix

\section{Basic building block}\label{appA}

The set $\mathfrak{P}$ and the tubes $\mathcal{T}_{\sigma}$ claimed in Proposition \ref{prop1} can be obtained by "dualising" a suitable grid construction (Figure \ref{fig1}) which was used to construct the main examples in \cite{MR4745881}. We start by recalling the details of that construction from \cite[Appendix A.1]{MR4745881}, and then we will show to transform this object into the one claimed in Proposition \ref{prop1}.

We define a piece of notation. Recall that $\pi_{\zeta}(x,y) := \zeta x + y$ for $\zeta \in [0,1)$ and $(x,y) \in \R^{2}$. If $\mathcal{P} \subset \mathcal{D}_{\delta}$ is a family of dyadic $\delta$-squares, $\pi_{0}(\cup \mathcal{P})$ is a family of dyadic $\delta$-intervals. We write $Y_{\mathcal{P}} \subset \delta \Z$ for the left-endpoints of these intervals. 

\begin{lemma}\label{lemma1} Let $\tau \in (1,2]$, $s \in (0,2 - \tau]$, and $\Delta \in 2^{-\N}$. For all $\delta \in 2^{-\N} \cap (0,\Delta]$ sufficiently small in terms of $\Delta,s,\tau$, there exists a set $\mathcal{P} \subset \mathcal{D}_{\delta}([-3,3]^{2})$ with the following properties.
\begin{itemize}
\item[\textup{(P1)} \phantomsection \label{P1}] If $\sigma \in \delta \Z \cap [0,1)$, then the "vertical slice" $\{p \in \mathcal{P} : p \cap (\{\sigma\} \times \R) \neq \emptyset\}$ contains a $\sim \delta^{\tau - 1}$-separated subset $\mathcal{P}_{\sigma}$ with the following property: if $I \subset [-2,2]$ is an interval of length $\ell(I) \geq \Delta$, then 
\begin{equation}\label{form24} |Y_{\mathcal{P}_{\sigma}} \cap I| \sim \ell(I)\delta^{1 -\tau}. \end{equation}

\item[\textup{(P2)} \phantomsection \label{P2}] There exists a $\delta^{s}$-separated subset $\Sigma \subset \delta \Z \cap [0,1]$ such that $|\pi_{\zeta}(\mathcal{P})|_{\delta} \lesssim \delta^{-(s + \tau)/2}$ for all $\zeta \in \Sigma$, and such that $\Sigma$ has the following property. If $I \subset [0,1]$ is an interval of length $\ell(I) \geq \Delta$, then $|\Sigma \cap I| \sim \ell(I)\delta^{-s}$.
\end{itemize}
\end{lemma}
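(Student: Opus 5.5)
We will build $\mathcal{P}$ as a "grid" $A \times B$, where $A$ collects the $\delta$-intervals covering $[0,1]$ (so every vertical slice at $\sigma \in \delta\Z \cap [0,1)$ is nonempty), and $B \subset [-2,2]$ is a union of $\delta$-intervals arranged as a multi-scale arithmetic progression. This is the Peltom\"aki-type construction recalled from \cite[Appendix A.1]{MR4745881}. The key ingredients are a scale $\rho := \delta^{(\tau - s)/2}$, or a similar intermediate scale, and a set of slopes $\Sigma$ consisting of rationals with small denominators. Along these slopes the projections $\pi_{\zeta}(A \times B)$ collapse.

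Concretely, we fix the progression structure first. We let $B$ be the $\delta$-neighbourhood, intersected with $\delta\Z$, of a set of the form $\{ j\delta^{\tau-1} : |j| \lesssim \delta^{1-\tau}\}$. To make (P2) possible, we replace $A$ in the projection computation by the structured form $A = \{ i\,\delta^{a} + k\delta : \ldots\}$. More precisely, we choose $\mathcal{P}$ to be the $\delta$-squares meeting the union of lines $\{y = \text{const}\}$ at heights in a $\delta^{\tau-1}$-spaced progression $Y$. We then thicken in the $x$-direction only along a coarse structure, so that for $\zeta = p/q$ with $q \lesssim \delta^{-s/2}$ the sumset $\zeta A + Y$ is covered by $\lesssim \delta^{-(s+\tau)/2}$ intervals.

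The steps run in the following order. First, we verify (P1). Each vertical slice at $\sigma$ contains the squares $\{\sigma\} \times (Y + [0,\delta))$, and we take $\mathcal{P}_{\sigma}$ to be these. They are $\delta^{\tau-1}$-separated, and an interval $I$ of length $\geq \Delta \gg \delta^{\tau-1}$ contains $\sim \ell(I)\delta^{1-\tau}$ points of $Y$. This uses that $\delta$ is small in terms of $\Delta$.

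Second, we define $\Sigma$ as the rationals $p/q \in [0,1]$ with $q \in [Q, 2Q]$, $Q := \delta^{-s/2}$, rounded to $\delta\Z$. Standard Farey spacing gives separation $\gtrsim Q^{-2} = \delta^{s}$. The count in any interval of length $\geq \Delta$ is $\sim \ell(I)Q^{2} = \ell(I)\delta^{-s}$, which holds after discarding a small portion or using a sieve lower bound for coprime pairs. This needs $\delta$ small relative to $\Delta$.

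Third, we bound $|\pi_{\zeta}(\mathcal{P})|_{\delta}$ for $\zeta = p/q \in \Sigma$. For this we need $A$ to be compatible with every such $q$ at once. We choose the $x$-coordinates of $\mathcal{P}$ to lie within $\delta$ of multiples of $\delta^{\tau-1}/L$, where $L := \operatorname{lcm}$-like structure. Alternatively, following \cite{MR4745881}, we take $Y$ to be the multiples of $\delta^{\tau-1}$ and observe that $\zeta x + Y$ modulo $\delta^{\tau-1}$ ranges over $\lesssim q \cdot(\text{mesh})$ classes. The image then has size at most (number of residues of $\zeta x$ mod $\delta^{\tau-1}$ at resolution $\delta$) times $\delta^{1-\tau}$.

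The main obstacle is this third step: reconciling (P1), which requires every $\delta$-column to be populated, with (P2), which requires collapse for all $\zeta \in \Sigma$ simultaneously. The resolution we would try first uses the exact arithmetic of the grid. With $x \in \delta\Z\cap[0,1]$ and $\zeta = p/q$, the values $\zeta x$ mod $\delta^{\tau - 1}$ at resolution $\delta$ cluster. Writing $x = m\delta$, the quantity $pm\delta/q$ modulo $\delta^{\tau-1}$ takes $\lesssim q + \delta^{\tau-2}/q$ distinct $\delta$-values up to $\delta$-error. This would give total count $\lesssim \delta^{1-\tau}(q + \delta^{\tau - 2}/q)\delta^{...}$, which we then balance. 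If the numerology does not close with a full column $A$, we instead sparsify each column to the required progression. (P1) only asks for a subset of the slice, so keeping columns at heights $Y$ shifted by a $\sigma$-dependent, $q$-compatible offset preserves both properties. Checking that this balancing yields exactly the exponent $(s+\tau)/2$ under $s \le 2-\tau$ is the crux.
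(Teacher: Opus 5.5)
There is a genuine gap. You never construct a $\mathcal{P}$ that satisfies (P1) and (P2) at the same time, and you say yourself that the crux is unresolved.

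Your only concrete candidate is the family of $\delta$-squares of $[0,1]\times Y$, with $Y$ a $\delta^{\tau-1}$-spaced progression. It fails (P2) badly. For every $\zeta \geq \delta^{\tau-1}$ the set $\pi_{\zeta}([0,1]\times Y) = [0,\zeta] + Y$ covers an interval of length $\sim 1$, so $|\pi_{\zeta}(\mathcal{P})|_{\delta} \sim \delta^{-1}$. This exceeds $\delta^{-(s+\tau)/2}$ whenever $s < 2-\tau$, which is the case the paper needs ($\tau = \tfrac{3}{2}$, $s = (1-\epsilon)/2$).

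Your count that $pm\delta/q \bmod \delta^{\tau-1}$ takes $\lesssim q + \delta^{\tau-2}/q$ values at resolution $\delta$ is wrong for the same reason. The step $p\delta/q \leq \delta$ is below the resolution, so all $\sim\delta^{\tau-2}$ residue classes are hit.

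The repair you suggest, sparsified columns with a ``$q$-compatible'' offset, is left unspecified. You also pair it with $\Sigma$ equal to the ordinary Farey fractions, and there is no evident way to make that work. The set whose projections collapse in the directions $p/q$ is the untilted lattice $\delta^{\tau/2}\Z^{2}$, and it meets only $\sim \delta^{-\tau/2}$ of the $\delta^{-1}$ columns.

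The missing idea is a genuinely two-dimensional lattice at scale $\delta^{\tau/2}$, slightly tilted; this is the construction of \cite[Appendix A.1]{MR4745881} that the paper cites. For instance, take the $\delta$-squares meeting $\Lambda \cap [-3,3]^{2}$, where $\Lambda := \{(a\delta^{\tau/2} + b\delta,\, b\delta^{\tau/2}) : a,b \in \Z\}$. This is $\delta^{\tau/2}\Z^{2}$ sheared by $(u,v)\mapsto(u+\delta^{1-\tau/2}v,v)$; a rotation by an angle $\sim\delta^{1-\tau/2}$ works equally well.

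For (P1): along each near-vertical lattice line, consecutive points drift by $\delta$ horizontally. So every $\delta$-column receives one point from each of $\sim\delta^{1-\tau}$ such lines, at heights spaced $\delta^{\tau/2}/\delta^{1-\tau/2} = \delta^{\tau-1}$ apart. When $\delta^{\tau/2-1}\in\N$, the column $x = m\delta$ has heights exactly $m\delta^{\tau/2} - a\delta^{\tau-1}$. This linear offset is what your vague column shift would have to be.

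For (P2): $\pi_{\zeta}(a\delta^{\tau/2} + b\delta,\, b\delta^{\tau/2}) = \delta^{\tau/2}\big(\zeta a + (1+\zeta\delta^{1-\tau/2})b\big)$. So the good directions are not the Farey fractions themselves but the transported fractions $\zeta = p/(q - p\delta^{1-\tau/2})$ with $q \lesssim \delta^{-s/2}$. For these, the image lies in a $\gtrsim \delta^{\tau/2}/q$-spaced set of bounded range, so $|\pi_{\zeta}(\mathcal{P})|_{\delta} \lesssim q\delta^{-\tau/2} \lesssim \delta^{-(s+\tau)/2}$. Rounding $\zeta$ to $\delta\Z$ costs only $O(\delta)$.

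The $\delta^{s}$-separation and the density $|\Sigma\cap I| \sim \ell(I)\delta^{-s}$ then follow from Farey spacing and equidistribution, valid on intervals of length $\gtrsim \delta^{s/2}\log(1/\delta)$. These transfer through this bi-Lipschitz change of variables, after passing to a maximal $\delta^{s}$-separated subset. Your Step 2 is therefore fine in spirit, but it must be applied to the directions adapted to the tilted lattice, not to the standard rationals.
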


The family $\mathcal{P} \subset \mathcal{D}_{\delta}([-3,3]^{2})$ in Lemma \ref{lemma1} is illustrated in Figure \ref{fig1}, reproduced from \cite[Appendix A.1]{MR4745881}. The family $\mathcal{P}$ is obtained by slightly rotating a grid of $\delta$-squares of cardinality $\delta^{-\tau}$ in such a way that every vertical "slice" contains the expected number $\sim \delta^{1 - \tau}$ of $\delta$-squares.

\begin{figure}[h!]
\begin{center}
\begin{overpic}[scale = 0.6]{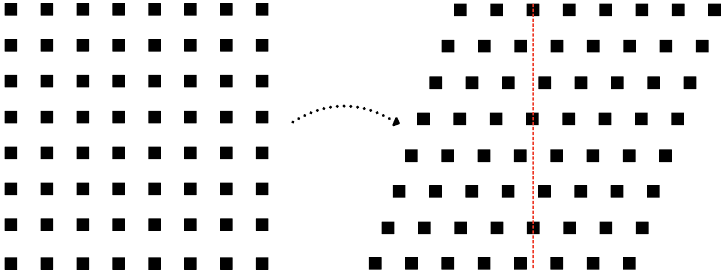}
\put(41,25){\tiny{rotation}}
\end{overpic}
\caption{The set in Lemma \ref{lemma1}.}\label{fig1}
\end{center}
\end{figure}

Slightly coarser versions of \nref{P1}-\nref{P2} are stated in \cite[(P1)-(P2)]{MR4745881}; the level precision in \nref{P1}-\nref{P2} was not needed in \cite{MR4745881}. However, the proofs in \cite[Appendix A.1]{MR4745881} give exactly what we need in \nref{P1}-\nref{P2}. We explain this in the following two remarks.

\begin{remark} Regarding \nref{P1}, it seems likely that one can simply take 
\begin{displaymath} \mathcal{P}_{\sigma} := \{p \in \mathcal{P} : p \cap (\{\sigma\} \times \R) \neq \emptyset\}, \end{displaymath}
but the existence of the claimed subset is directly guaranteed by \cite[Claim A.2]{MR4745881}. The proof of \cite[Claim A.2]{MR4745881} shows that $\{p \in \mathcal{P} : p \cap (\{\sigma\} \times \R) \neq \emptyset\}$ contains an arithmetic progression with gap $\delta^{\tau - 1}$ and length $\sim \delta^{1 - \tau}$. Now $\mathcal{P}_{\sigma}$ can be defined to be this progression, and the "density property" \eqref{form24} holds for $\delta > 0$ so small that $2\delta^{\tau - 1} \leq \Delta$.  \end{remark} 

\begin{remark} The set $\Sigma$ in \nref{P2} is defined to be a maximal $\delta^{s}$-separated subset of the family "$\mathcal{E}$" appearing in \cite[(A.3)]{MR4745881}. Now \cite[Claim A.5]{MR4745881} states that $|\Sigma \cap I| \sim \ell(I)\delta^{-s}$ for all intervals $I \subset [0,1]$ of length $\ell(I) \geq C\delta^{s/2}\log(1/\delta)$, where $C \geq 1$ is absolute. In particular, \nref{P2} holds provided that $\delta > 0$ is so small that $C\delta^{s/2}\log(1/\delta) \leq \Delta$. The upper bound $|\pi_{\zeta}(\mathcal{P})|_{\delta} \lesssim \delta^{-(s + \tau)/2}$, for $\zeta \in \Sigma$, is established in \cite[Claim A.4]{MR4745881}. \end{remark}

We are then ready to prove Proposition \ref{prop1}, which we restate here:
\begin{proposition}\label{prop1App} For every $\epsilon,\Delta \in 2^{-\N}$, there exists $\delta_{0} = \delta_{0}(\epsilon,\Delta) \in (0,\tfrac{1}{2}\Delta]$ such that the following holds for all $\delta \in 2^{-\N} \cap (0,\delta_{0}]$. There exists a family $\mathfrak{P} \subset \mathcal{D}_{\delta}$ with the following property. Assume that $t \in [1 + \epsilon,\tfrac{3}{2}]$, and $\mathcal{Q} \subset \mathcal{D}_{\Delta}$. Then 
\begin{equation}\label{form3app} \mathcal{H}_{\infty}^{t - \epsilon}(\mathcal{Q} \cap \mathfrak{P}) \geq \mathcal{H}^{t}_{\infty}(\mathcal{Q}). \end{equation}
Moreover, for each $\sigma \in \delta \Z \cap [0,1)$ there exists a family $\mathcal{R}_{\sigma}$ of $(\delta \times \Delta)$-rectangles whose longer sides have slope $\sigma$, with the following properties:
\begin{itemize}
\item[\textup{(1)}] Let $\mathcal{T}_{\sigma}$ be the $\delta$-tubes spanned by the rectangles $\mathcal{R}_{\sigma}$. Every tube in $\mathcal{T}_{\sigma}$ contains exactly one element of $\mathcal{R}_{\sigma}$, and the tubes in $\mathcal{T}_{\sigma}$ are $10\delta$-separated.
\item[\textup{(2)}] $\cup \mathfrak{P} \cap (\cup \{10T : T \in \mathcal{T}_{\sigma}\}) = \emptyset$.
\item[\textup{(3)}] If $t \in [1 + \epsilon,\tfrac{3}{2}]$, and $\mathcal{Q} \subset \mathcal{D}_{\Delta}$  then 
\begin{displaymath} \mathcal{H}^{t - \epsilon}_{\infty}(\mathcal{Q} \cap \mathcal{D}_{\delta}(\cup \mathcal{R}_{\sigma})) \geq \mathcal{H}^{t}_{\infty}(\mathcal{Q}). \end{displaymath}
\end{itemize}
\end{proposition}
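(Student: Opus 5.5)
The plan is to obtain $\mathfrak{P}$ and the tubes $\mathcal{T}_{\sigma}$ by point--line duality from the grid in Lemma \ref{lemma1}. I would apply Lemma \ref{lemma1} with $s := \tfrac{1}{2}$ and $\tau := \tfrac{3}{2} - \eta$, where $\eta = \eta(\epsilon) > 0$ is small. The reason for taking $\tau$ slightly below $\tfrac{3}{2}$ is that \nref{P2} then gives $|\pi_{\zeta}(\mathcal{P})|_{\delta} \lesssim \delta^{-1 + \eta/2}$ for $\zeta \in \Sigma$. So the $20\delta$-neighbourhood of $\pi_{\zeta}(\cup \mathcal{P})$ occupies only a $\lesssim \delta^{\eta/2}$ fraction of $[-2,2]$.

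\textbf{Duality.} Associate to a tube of slope $\sigma$ and intercept $y_{0}$ the dual point $(\sigma,y_{0})$, with signs arranged so that a point $(\zeta,c)$ lies in that tube exactly when $c \approx \pi_{\zeta}(\sigma,y_{0}) = \zeta\sigma + y_{0}$, up to $O(\delta)$. I then define the two families as follows.
\begin{itemize}
\item[(a)] $\mathfrak{P}$ consists of the $\delta$-squares (inside $[0,1)^{2}$, after a harmless affine normalisation) centred at points $(\zeta,c)$ with $\zeta \in \Sigma$ and $c \notin [\pi_{\zeta}(\cup \mathcal{P})]_{20\delta}$.
\item[(b)] $\mathcal{T}_{\sigma}$ consists of the tubes of slope $\sigma$ with intercepts $y_{0} \in Y_{\mathcal{P}_{\sigma}}$, the arithmetic progression from \nref{P1}.
\end{itemize}
Property (2) is then the tautology "$c$ avoids the projection, hence the dual line avoids every square of $\mathcal{P}$, in particular the points $(\sigma,y_{0})$." The constant $20$ absorbs the factor $10$ in $10T$. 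Since the intercepts are $\sim \delta^{\tau - 1}$-separated and $\delta^{\tau-1} \gg 10\delta$, the tubes are $10\delta$-separated, which gives the separation part of (1).

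\textbf{Rectangles.} Each tube must contain exactly one rectangle, yet $\cup \mathcal{R}_{\sigma}$ must meet every $Q \in \mathcal{D}_{\Delta}$ substantially. I would partition $[0,1)$ into the $\Delta^{-1}$ columns of width $\Delta$. Using the density property \eqref{form24} on intervals of length $\geq \Delta$, I split the progression $Y_{\mathcal{P}_{\sigma}}$ into $\Delta^{-1}$ sub-progressions (residue classes mod $\Delta^{-1}$ of the index). To the tubes in the $j$-th class I assign the $(\delta \times \Delta)$-rectangle lying over the $j$-th column. Each class still satisfies $|Y \cap I| \sim_{\Delta} \ell(I)\delta^{1-\tau}$ for $\ell(I) \geq \Delta$, once $\delta$ is small enough that the gap $\Delta^{-1}\delta^{\tau - 1} \leq \Delta$.

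\textbf{Content bounds.} For \eqref{form3app} and (3), the approach is uniform measures plus the mass distribution principle. Fix $Q \in \mathcal{D}_{\Delta}$. Put the uniformly distributed probability measure on $Q \cap \mathfrak{P}$ (respectively on $Q \cap \mathcal{D}_{\delta}(\cup\mathcal{R}_{\sigma})$). By \nref{P2} and the $\delta^{\eta/2}$-smallness of the excluded set, respectively by the rectangle construction, this measure gives a $\delta^{-r}$-square mass $\lesssim_{\Delta} r^{3/2 - \eta'}$, with $\eta' \to 0$ as $\eta \to 0$. The exponent comes from a $\tfrac{1}{2}$-dimensional set of "columns" (either $\Sigma$, or the intercepts of dimension $\tau - 1$) times a full one-dimensional fibre. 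The one-dimensional fibre is a vertical segment in case (a) and a $\Delta$-long rectangle in case (b). It follows that $\mathcal{H}^{t - \epsilon/2}_{\infty}(\{Q\} \cap \cdot) \gtrsim_{\Delta} \ell(Q)^{t}\,\delta^{-c(\epsilon)}$ for all $t \leq \tfrac{3}{2}$ simultaneously, and the factor $\delta^{-c(\epsilon)}$ beats every $\Delta$-dependent constant once $\delta \leq \delta_{0}(\epsilon,\Delta)$.

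To pass from single cubes to an arbitrary $\mathcal{Q} \subset \mathcal{D}_{\Delta}$, take an optimal cover of $\cup(\mathcal{Q} \cap \cdot)$. Cubes of side $\geq \Delta$ in the cover are handled by the monotonicity remark in the Notation section. Cubes of side $< \Delta$ are localised to one $Q$, where the single-cube estimate applies, and summing gives $\geq \mathcal{H}^{t}_{\infty}(\mathcal{Q})$.

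I expect this last step to be the main obstacle: getting content bounds uniform in $t \in [1+\epsilon,\tfrac{3}{2}]$ and in $\mathcal{Q}$, with the loss of exactly $\epsilon$ and with no multiplicative constant. The plan for it is to keep a polynomial gain $\delta^{-c(\epsilon)}$ in the single-cube estimate, and to verify the Frostman condition at every intermediate scale between $\delta$ and $\Delta$. This needs the regularity from \eqref{form24} and \nref{P2} and Claim A.5 of \cite{MR4745881}, not just the cardinalities.
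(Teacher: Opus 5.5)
Your duality set-up, the residue-class assignment of rectangles, and the reduction from $\mathcal{Q}$ to single cubes all follow the paper. Your parameter choice ($s=\tfrac12$, $\tau=\tfrac32-\eta$) loses a little dimension on the tube side rather than on $\Sigma$, where the paper takes $\tau=\tfrac32$, $s=\tfrac{1-\epsilon}{2}$; it would work equally well. The gap is the step you flagged. The single-cube bound $\mathcal{H}^{t-\epsilon/2}_{\infty}(Q\cap\mathfrak{P})\gtrsim_{\Delta}\ell(Q)^{t}\delta^{-c(\epsilon)}$ cannot hold. $\{Q\}$ is itself an admissible cover, so $\mathcal{H}^{u}_{\infty}(Q\cap\mathfrak{P})\le\ell(Q)^{u}$, and the gain over $\ell(Q)^{t}$ is at most $\ell(Q)^{-\epsilon/2}$, independently of $\delta$. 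In the mass distribution argument the binding scale is the top one. For $\ell(W)=r\ge\delta^{1/2}$ one has $\mu(W)\lesssim r^{2}/\ell(Q)^{2}$, so $r^{u}/\mu(W)\gtrsim\ell(Q)^{2}r^{u-2}$ is smallest at $r=\ell(Q)$. A $\delta$-power gain appears only in the regime $\delta\le r\le\delta^{1/2}$. So with $Q\in\mathcal{D}_{\Delta}$ you must beat the implicit constants of \eqref{form24}, \nref{P2} and the duality step, plus your $\Delta$-dependent losses, using only the fixed number $\Delta^{-\epsilon}$. This fails once $\Delta^{-\epsilon}$ is below those constants (e.g. $\Delta=\tfrac12$, $\epsilon$ small), and no choice of $\delta_{0}$ repairs it.

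The missing idea is an auxiliary scale $\underline{\Delta}\le\Delta/100$ with $\underline{\Delta}^{-\epsilon/2}\ge\max\{C,\Delta^{-1}\}$. Since $\cup\mathcal{D}_{\underline{\Delta}}(\mathcal{Q})=\cup\mathcal{Q}$, you may replace $\mathcal{Q}$ by $\mathcal{D}_{\underline{\Delta}}(\mathcal{Q})$ without changing either side of \eqref{form3app} or of (3). Apply Lemma \ref{lemma1} with density scale $\underline{\Delta}$, so that $|\Sigma\cap I|$ and $|Y_{\mathcal{P}_{\sigma}}\cap I_{Q}|$ have lower bounds on intervals of length $\sim\underline{\Delta}$, and only then take $\delta$ small in terms of $\underline{\Delta}$. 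The per-cube gain $\underline{\Delta}^{-\epsilon/2}$ then absorbs all constants, including the factor $\Delta$ lost to residue-class thinning, and the $\delta$-power gain handles the small-scale regime. This refinement also forces a change in your rectangle placement. Rectangles of length $\Delta$ over disjoint columns of width $\Delta$ have horizontal extent only about $\Delta/\sqrt{1+\sigma^{2}}$, so small cubes near one edge of a column can meet no rectangle at all. The paper instead uses overlapping windows $\bar{\pi}_{\sigma}^{-1}(2I_{i})$ with $I_{i}\in\mathcal{D}_{\Delta/2}([-2,2])$, so every $\underline{\Delta}$-cube lies entirely inside the window of some residue class.
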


\begin{proof} Fix $\epsilon,\Delta \in 2^{-\N}$ as in the statement (we may assume that $\epsilon \in (0,1)$). Fix also $\underline{\Delta} \in (0,\tfrac{1}{100}\Delta]$ so small that
\begin{equation}\label{form9} \underline{\Delta}^{-\epsilon/2} \geq \max\{C,\Delta^{-1}\}, \end{equation} 
where $C > 0$ is an absolute constant to be determined later. Then, we apply Lemma \ref{lemma1} with this $\underline{\Delta}$,
\begin{displaymath} \tau := \tfrac{3}{2} \quad \text{and} \quad s := (1 - \epsilon)/2 \in (0,2 - \tau], \end{displaymath}
and $\delta \in (0,\underline{\Delta}]$ so small that the conditions of Lemma \ref{lemma1} are satisfied, and additionally
\begin{equation}\label{form7} C\delta^{\epsilon/2} \leq \underline{\Delta}^{4}/2. \end{equation}
(The largest "$\delta$" so that the previous conditions are met is now defined to be the scale threshold "$\delta_{0}$" in Proposition \ref{prop1App}.) Let $\mathcal{P} \subset \mathcal{D}_{\delta}([-3,3]^{2})$ be the resulting set.

\begin{figure}[h!]
\begin{center}
\begin{overpic}[scale = 0.4]{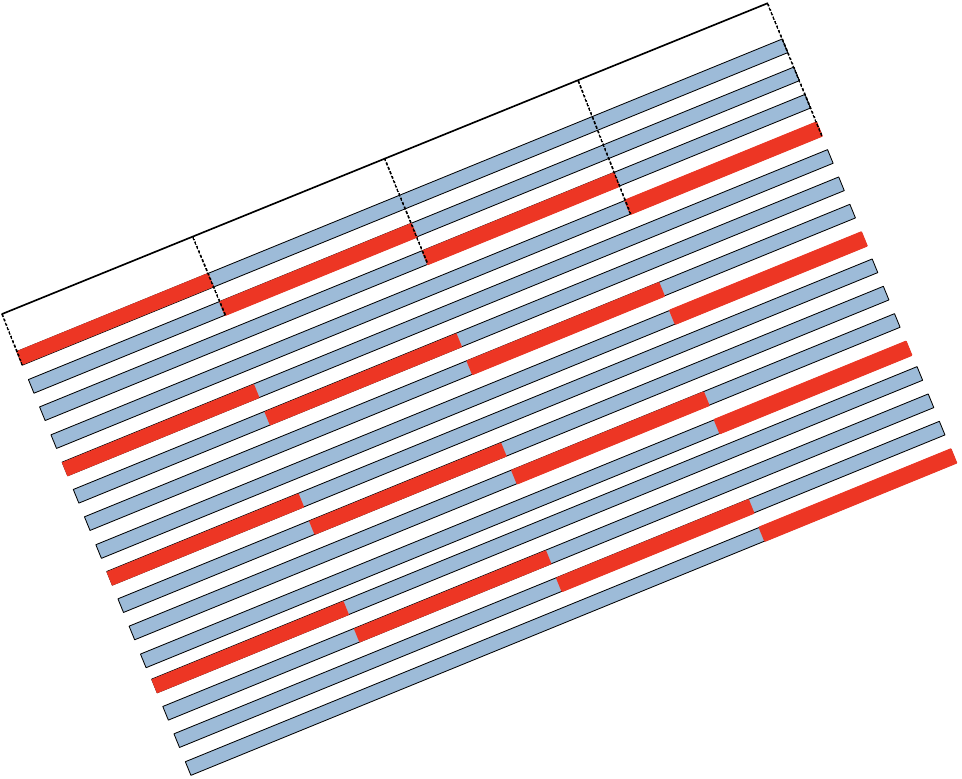}
\put(8,55){$I_{1}$}
\put(25,62.5){$I_{2}$}
\put(45,70){$I_{3}$}
\put(66,79){$I_{4}$}
\put(-2,42){\tiny{$b_{1}$}}
\put(-1,39){\tiny{$b_{2}$}}
\put(0,36){\tiny{$b_{3}$}}
\put(1,33){\tiny{$b_{4}$}}
\put(2,30){\tiny{$b_{5}$}}
\put(3,27){\tiny{$b_{6}$}}
\put(4,24){\tiny{$\ldots$}}
\put(13,-1){\tiny{$b_{16}$}}

\end{overpic}
\caption{The choice of the rectangles $R_{T}$, $T \in \mathcal{T}_{\sigma}$, shown in red. For artistic reasons the picture shows the rectangles $T_{\sigma,b_{j}} \cap \bar{\pi}_{\sigma}^{-1}(I_{i})$; the real ones defined in \eqref{def:rectangles} have twice the length.}\label{fig3}
\end{center}
\end{figure}

We now define the families of rectangles and tubes appearing in the statement. Fix $\sigma \in \delta \Z \cap [0,1)$, and recall the (subset of the) "vertical slice" $\mathcal{P}_{\sigma}$ from Lemma \ref{lemma1}\nref{P1}. We define the $\delta$-tube family $\mathcal{T}_{\sigma}$ as the "dual" of $\mathcal{P}_{\sigma}$, under standard point-line duality. Abbreviate $Y_{\sigma} := Y_{\mathcal{P}_{\sigma}}$ (recall the notation from Lemma \ref{lemma1}), and define the family of (parallel slope-$\sigma$) lines
\begin{displaymath} \mathcal{L}_{\sigma} := \{\ell_{\sigma,b} : b \in Y_{\sigma}\}, \end{displaymath}
where $\ell_{\sigma,b} = \{(x,y) \in \R^{2} : y = \sigma x + b\}$. Finally, define the $\delta$-tubes 
\begin{displaymath} \mathcal{T}_{\sigma} := \{T_{\sigma,b} : b \in Y_{\sigma}\} :=  \{[\ell_{\sigma,b}]_{\delta/2} : b \in Y_{\sigma}\}. \end{displaymath}
Note that the lines in $\mathcal{L}_{\sigma}$ have roughly the same separation as the squares in $\mathcal{P}_{\sigma}$, which is $\sim \delta^{\tau - 1} = \delta^{1/2}$ according to Lemma \ref{lemma1}\nref{P1}. In particular, the tubes in $\mathcal{T}_{\sigma}$ are $10\delta$-separated (assuming $\delta > 0$ small enough). This proves Proposition \ref{prop1App}\nref{1}.

For each $T \in \mathcal{T}_{\sigma}$, we plan to pick a single $(\delta \times \Delta)$-rectangle $R_{T} \subset T$. Once this has been accomplished, we will define the family of $(\delta \times \Delta)$-rectangles appearing in the statement of Proposition \ref{prop1App} as
\begin{displaymath} \mathcal{R}_{\sigma} := \{R_{T} : T \in \mathcal{T}_{\sigma}\}. \end{displaymath}
Likely one (working) way of choosing the rectangles $R_{T}$ would be a uniformly random selection among all those $(\delta \times \Delta)$-subrectangles of $T$ which hit $[0,1]^{2}$ (or an arbitrary rectangle if $T \cap [0,1]^{2} = \emptyset$). For technical reasons, a deterministic choice is slightly preferable, where the rectangles are chosen as "well-spread" as possible. The idea can be deciphered from Figure \ref{fig3}, but also need an analytic expression to work with.

Let $\bar{\pi}_{\sigma}$ be the orthogonal projection to the subspace parallel to lines in $\mathcal{L}_{\sigma}$ (or the tubes $\mathcal{T}_{\sigma}$). Enumerate $\mathcal{D}_{\Delta/2}([-2,2]) =: \{I_{1},\dots,I_{m}\}$, where $m \sim \Delta^{-1}$. Enumerate also $Y_{\sigma} := \{b_{1},\ldots,b_{n}\}$ in increasing order, where $n = |Y_{\sigma}| \sim \delta^{-1/2}$. Note that $n \gg m$. Now, for each $b_{j} \in Y_{\sigma}$, define the $(\delta \times \Delta)$-rectangle
\begin{equation}\label{def:rectangles} R_{\sigma,b_{j}} := T_{\sigma,b_{j}} \cap \bar{\pi}_{\sigma}^{-1}(2I_{i}) \subset T_{\sigma,b_{j}}, \qquad j = i (\mathrm{mod\,} m). \end{equation}

Now the tubes and rectangles in the statement of Proposition \ref{prop1App} have been defined. Next, we define the family $\mathfrak{P} \subset \mathcal{D}_{\delta}$, and verify the (remaining) properties \eqref{form3app} and (2)-(3). Let
\begin{displaymath} \mathcal{T} := \bigcup_{\sigma \in \delta \Z \cap [0,1)} \mathcal{T}_{\sigma}. \end{displaymath}
Thus, $\mathcal{T}$ is roughly the "tube dual" of of $\mathcal{P}$, or more precisely the dual of the subset obtained as the union of the sets $\mathcal{P}_{\sigma}$, $\sigma \in \delta \Z \cap [0,1)$. Recall from Lemma \ref{lemma1}\nref{P2} that $|\pi_{\zeta}(\mathcal{P})|_{\delta} \lesssim \delta^{-(s + \tau)/2} = \delta^{-1 + \epsilon/2}$ for all $\zeta \in \Sigma$. Write $10\mathcal{T} := \{[\ell]_{5\delta} : [\ell]_{\delta/2} \in \mathcal{T}\}$. For $\zeta \in [0,1]$, write also $(10\mathcal{T})_{\zeta}$ for the "vertical $\zeta$-slice" of the tube family $10\mathcal{T}$:
\begin{displaymath} (10\mathcal{T})_{\zeta} := \{q = [\zeta,\zeta + \delta) \times [b,b + \delta) \in \mathcal{D}_{\delta} : q \cap 10T \neq \emptyset \text{ for some } T \in \mathcal{T}\}. \end{displaymath}
A standard (easy to check) fact about point-line duality is the following: the $\pi_{\zeta}$-projection of $P \subset \R^{2}$ equals the $y$-coordinates of the vertical $\zeta$-slice of the dual line family $\{\ell_{a,b} : (a,b) \in P\}$. In our situation, this principle implies that the $y$-coordinates of the squares in $(10\mathcal{T})_{\sigma}$ are contained in the $O(\delta)$-neighbourhood of $\pi_{\zeta}(\mathcal{P})$. In particular,
\begin{equation}\label{form6} |(10\mathcal{T})_{\zeta}| \leq C|\pi_{\zeta}(\mathcal{P})|_{\delta} \leq C\delta^{-1 + \epsilon/2}, \qquad \zeta \in \Sigma, \end{equation}
where $C > 0$ is absolute. We are now prepared to define the set $\mathfrak{P}$:
\begin{equation}\label{form5} \mathfrak{P} := \bigcup_{\zeta \in \Sigma} \{q = [\zeta,\zeta + \delta) \times [b,b + \delta) \in \mathcal{D}_{\delta} : q \notin (10\mathcal{T})_{\zeta}\}. \end{equation} 
In English, we include to $\mathfrak{P}$ all the dyadic $\delta$-squares around the vertical segments $\{\zeta\} \times [0,1]$, $\zeta \in \Sigma$, which are not contained in the $10\delta$-neighbourhood of any tube in $\mathcal{T}$.

Now the set $\mathfrak{P}$ and the families $\mathcal{R}_{\sigma},\mathcal{T}_{\sigma}$ have been defined, so it remains to prove the claimed properties in Proposition \ref{prop1App}, namely \eqref{form3app} and (3). The property (2) is already clear from the construction of $\mathfrak{P}$. 

We start with \eqref{form3app}. Let $\mathcal{Q} \subset \mathcal{D}_{\Delta}$ be arbitrary, and fix $t \in [1 + \epsilon,\tfrac{3}{2}]$. The claim is that $\mathcal{H}_{\infty}^{t - \epsilon}(\mathcal{Q} \cap \mathfrak{P}) \geq \mathcal{H}^{t}_{\infty}(\mathcal{Q})$. Since $\delta \leq \underline{\Delta} \leq \Delta$, this claim is formally equivalent to the claim $\mathcal{H}^{t - \epsilon}(\mathcal{D}_{\underline{\Delta}}(\mathcal{Q}) \cap \mathfrak{P}) \geq \mathcal{H}^{t}_{\infty}(\mathcal{D}_{\underline{\Delta}}(\mathcal{Q}))$. Replacing $\mathcal{Q}$ by $\mathcal{D}_{\underline{\Delta}}(\mathcal{Q})$ without altering notation, we will from now on assume that $\mathcal{Q} \subset \mathcal{D}_{\underline{\Delta}}$.

Let $\mathcal{W} \subset \mathcal{D}$ be an arbitrary cover of $\mathcal{Q} \cap \mathfrak{P}$. We need to show that $\sum_{W \in \mathcal{W}} \ell(W)^{t - \epsilon} \geq \mathcal{H}^{t}(\mathcal{Q})$. Reducing $\mathcal{W}$ to its maximal elements, we may assume that $\mathcal{W}$ is disjoint. We may also assume that every $W \in \mathcal{W}$ intersects $Q \cap \mathfrak{P}$ for some $Q \in \mathcal{Q}$. Since $\mathfrak{P} \subset \mathcal{D}_{\delta}$, and $t < 2$, we may finally assume that $\ell(W) \geq \delta$ for all $W \in \mathcal{W}$ (as explained after \eqref{def:content}).

Let $\mathcal{Q}_{0} \subset \mathcal{Q} \subset \mathcal{D}_{\underline{\Delta}}$ be the squares in $\mathcal{Q}$ which are not strictly contained in any element of $\mathcal{W}$. We claim that
\begin{equation}\label{form4} \mathop{\sum_{W \in \mathcal{W}}}_{W \subset Q} \ell(W)^{t - \epsilon} \geq \ell(Q)^{t}, \qquad Q \in \mathcal{Q}_{0}, \, t \in [1 + \epsilon,\tfrac{3}{2}]. \end{equation} 
Let us see how this implies $\mathcal{H}^{t - \epsilon}_{\infty}(\mathcal{Q} \cap \mathfrak{P}) \geq \mathcal{H}^{t}_{\infty}(\mathcal{Q})$. Let 
\begin{displaymath} \mathcal{W}' := \mathcal{Q}_{0} \cup \mathcal{W}'', \end{displaymath}
where $\mathcal{W}''$ consists of all the squares in $\mathcal{W}$ which are not contained in any element of $\mathcal{Q}$. We claim that $\mathcal{W}'$ is a cover of $\mathcal{Q}$. Indeed, if $Q \in \mathcal{Q}_{0}$, then $Q$ is evidently covered by $\mathcal{W}'$. On the other hand, if $Q \in \mathcal{Q} \, \setminus \, \mathcal{Q}_{0}$, then by definition $Q$ is strictly contained in some $W \in \mathcal{W}$. Then $\ell(W) > \ell(Q) = \underline{\Delta}$, so $W$ cannot be contained in any element of $\mathcal{Q}$, hence $W \in \mathcal{W}''$. Thus $Q \subset W \in \mathcal{W'}$. 

Now we know that $\mathcal{W}'$ is a cover of $\mathcal{Q}$. We next claim that that every element of $\mathcal{W}$ either lies in $\mathcal{W}''$, or then is contained in some element of $\mathcal{Q}_{0}$ (these two subsets of $\mathcal{W}$ are evidently disjoint). Indeed, if $W \in \mathcal{W}$, then $W \cap \mathcal{Q} \cap \mathfrak{P} \neq \emptyset$ for some $Q \in \mathcal{Q}$. Now, if $Q \subsetneq W$, it holds $W \in \mathcal{W}''$ ($\ell(W) > \ell(Q) = \underline{\Delta}$, so $W$ cannot be contained in any element of $\mathcal{Q}$). Alternatively, $W \subset Q$. But now $Q \in \mathcal{Q}_{0}$, because $Q$ cannot be strictly contained in any element of $\mathcal{W}$: if $Q$ was strictly contained in some $W' \in \mathcal{W}$, then $W \subsetneq W'$, violating the disjointness of $\mathcal{W}$. Thus $W \subset Q \in \mathcal{Q}_{0}$.

Based on the previous claim, we can now decompose
\begin{displaymath} \sum_{W \in \mathcal{W}} \ell(W)^{t - \epsilon} = \sum_{W \in \mathcal{W}''} \ell(W)^{t - \epsilon} + \sum_{Q \in \mathcal{Q}_{0}} \mathop{\sum_{W \in \mathcal{W}}}_{W \subset Q} \ell(W)^{t - \epsilon} \stackrel{\eqref{form4}}{\geq} \sum_{W \in \mathcal{W}''} \ell(W)^{t} + \sum_{Q \in \mathcal{Q}_{0}} \ell(Q)^{t}. \end{displaymath} 
Since $\mathcal{Q}_{0} \cup \mathcal{W}'' = \mathcal{W}'$ is a cover of $\mathcal{Q}$, the right hand side is bounded from below by $\mathcal{H}^{t}_{\infty}(\mathcal{Q})$. This completes the proof of $\mathcal{H}^{t - \epsilon}_{\infty}(\mathcal{Q} \cap \mathfrak{P}) \geq \mathcal{H}^{t}_{\infty}(\mathcal{Q})$, assuming \eqref{form4}.

We next prove \eqref{form4}. Fix $Q := I \times J \in \mathcal{Q}_{0}$, where $I,J \in \mathcal{D}_{\underline{\Delta}}([0,1)$. Let us first record some properties of $Q \cap \mathfrak{P}$. By definition, recall \eqref{form5}, $\mathfrak{P}$ consists of certain squares of the form $[\zeta,\zeta + \delta) \times [b,b + \delta) \subset [0,1)^{2}$, where $\zeta \in \Sigma$ and $b \in \delta \Z$. By Lemma \ref{lemma1}\nref{P2}, the set $\Sigma$ is $\delta^{s}$-separated, and $|\Sigma \cap I| \sim \underline{\Delta} \delta^{-s}$. This implies that 
\begin{equation}\label{form8} |\mathfrak{P} \cap W| \lesssim \begin{cases} \ell(W)^{2} \delta^{-1 - s}, & \delta^{s} \leq \ell(W) \leq \underline{\Delta}, \\ \ell(W)\delta^{-1}, & \delta \leq \ell(W) \leq \delta^{s}, \end{cases} \qquad W \in \mathcal{D}. \end{equation} 
We also need a lower bound for $|\mathfrak{P} \cap Q|$. Recall from \eqref{form6} that $|(10\mathcal{T})_{\zeta}| \leq C\delta^{-1 + \epsilon/2}$ for all $\zeta \in \Sigma$. Therefore, 
\begin{align} |\mathfrak{P} \cap Q| & \geq |\Sigma \cap I| \cdot ((\underline{\Delta}/\delta) - |(10\mathcal{T})_{\zeta}|) \notag\\
&\label{form12} \gtrsim \underline{\Delta} \delta^{-s} \cdot ((\underline{\Delta}/\delta) - C\delta^{-1 + \epsilon/2}) \stackrel{\eqref{form7}}{\gtrsim} \underline{\Delta}^{2}\delta^{-1 - s}. \end{align} 
Now, to establish \eqref{form4}, note that $\mathfrak{P} \cap Q$ is covered by the squares $W \in \mathcal{W}$ with $W \subset Q$ (otherwise $Q$ would be strictly contained in some element of $\mathcal{W}$ contrary to the definition of $Q \in \mathcal{Q}_{0}$). The argument splits to two cases: (i) at least half of the squares in $\mathfrak{P} \cap Q$ are contained in squares $W \in \mathcal{W}$ with $\delta^{s} \leq \ell(W) \leq \underline{\Delta}$, or (ii) at least half of the squares in $\mathfrak{P} \cap Q$ are contained in squares $W \in \mathcal{W}$ with $\delta \leq \ell(W) \leq \delta^{s}$.

Assume that case (i) occurs. Then, using $t - \epsilon - 2 \leq 0$, we estimate
\begin{displaymath} \mathop{\sum_{W \in \mathcal{W}}}_{W \subset Q} \ell(W)^{t - \epsilon} \geq \underline{\Delta}^{t - \epsilon - 2}\delta^{1 + s} \mathop{\sum_{W \in \mathcal{W}}}_{W \subset Q, \delta^{s} \leq \ell(W) \leq \underline{\Delta}} \ell(W)^{2}\delta^{-1 - s} \stackrel{\eqref{form8}-\eqref{form12}}{\gtrsim} \underline{\Delta}^{t - \epsilon} = \underline{\Delta}^{-\epsilon}\ell(Q)^{t}. \end{displaymath}
This gives \eqref{form4}, provided that the constant "$C$" in \eqref{form9} was chosen sufficiently large.

Assume next that case (ii) occurs. This time, using $1 + \epsilon \leq t \leq \tfrac{3}{2}$, we estimate
\begin{align*} \mathop{\sum_{W \in \mathcal{W}}}_{W \subset Q} \ell(W)^{t - \epsilon} & \geq \delta^{t - \epsilon} \mathop{\sum_{W \in \mathcal{W}}}_{W \subset Q, \delta \leq \ell(W) \leq \delta^{s}} \ell(W)\delta^{-1}\\
& \stackrel{\eqref{form8}-\eqref{form12}}{\gtrsim} \delta^{t - \epsilon} \cdot \underline{\Delta}^{2}\delta^{- s - 1} = \underline{\Delta}^{2}\delta^{t - \epsilon/2 - 3/2} \geq \underline{\Delta}^{2}\delta^{-\epsilon/2}. \end{align*}
Since $\underline{\Delta}^{2}\delta^{-\epsilon/2} \geq C\underline{\Delta}^{t}$ by \eqref{form7}, this yields \eqref{form4} in case (ii). The proof of \eqref{form4} (and \eqref{form3app}) is therefore complete.

We move on to the proof of Proposition \ref{prop1App}(3). Fix $\sigma \in \delta \Z \cap [0,1)$, $\mathcal{Q} \subset \mathcal{D}_{\Delta}$ and $t \in [1 + \epsilon,\tfrac{3}{2}]$. The claim is that $\mathcal{H}^{t - \epsilon}_{\infty}(\mathcal{Q} \cap \mathcal{D}_{\delta}(\cup \mathcal{R}_{\sigma}))) \geq \mathcal{H}^{t}_{\infty}(\mathcal{Q})$. The proof greatly resembles the proof of \eqref{form3app}. Again, we may assume that $\mathcal{Q} \subset \mathcal{D}_{\underline{\Delta}}$. We abbreviate $\mathfrak{R}_{\sigma} := \mathcal{D}_{\delta}(\cup \mathcal{R}_{\sigma})$. Let $\mathcal{W}$ be an arbitrary cover of $\mathcal{Q} \cap \mathfrak{R}_{\sigma}$ by dyadic cubes with side $\leq 1$. We may (again) assume that the elements in $\mathcal{W}$ are disjoint and have side-length $\geq \delta$. 

As in the proof of \eqref{form3app}, let $\mathcal{Q}_{0} \subset \mathcal{Q}$ be the squares in $\mathcal{Q}$ which are not strictly contained in any element of $\mathcal{W}$. Our claim is formally the same as \eqref{form4}, namely
\begin{equation}\label{form10} \mathop{\sum_{W \in \mathcal{W}}}_{W \subset Q} \ell(W)^{t - \epsilon} \geq \ell(Q), \qquad Q \in \mathcal{Q}_{0}, \, t \in [1 + \epsilon,\tfrac{3}{2}]. \end{equation}
This implies $\mathcal{H}^{t - \epsilon}_{\infty}(\mathcal{Q} \cap \mathfrak{R}_{\sigma}) \geq \mathcal{H}^{t}_{\infty}(\mathcal{Q})$ by repeating the argument below \eqref{form4}.

Let us prove \eqref{form10}. Fix $Q \in \mathcal{Q}_{0} \subset \mathcal{D}_{\underline{\Delta}}$. We first record some properties of $Q \cap \mathfrak{R}_{\sigma}$. We already discussed earlier that the tubes $T \in \mathcal{T}_{\sigma}$ are $\sim \delta^{1/2}$-separated. This implies the following non-concentration condition for $\mathfrak{R}_{\sigma}$:
\begin{equation}\label{form11} |\mathfrak{R}_{\sigma} \cap W| \lesssim \begin{cases} \ell(W)^{2}\delta^{-3/2}, & \delta^{1/2} \leq \ell(W) \leq \underline{\Delta}, \\ \ell(W)\delta^{-1}, & \delta \leq \ell(W) \leq \delta^{1/2}, \end{cases} \qquad W \in \mathcal{D}, \, \sigma \in \delta \Z \cap [0,1].\end{equation} 
We also need a lower bound for $|\mathfrak{R}_{\sigma} \cap Q|$. For this purpose, we start by noting that if $\sigma \in \delta \Z \cap [0,1]$, then many elements of $\mathcal{T}_{\sigma}$ intersect $Q$ significantly. More precisely, there exists an interval $I_{Q} \subset [-2,2]$ of length $\ell(I) \sim \underline{\Delta}$ such that $T_{\sigma,b}$ "enters $Q$ from the left and exits from the right" for all $b \in I_{Q}$, see Figure \ref{fig4}. A fully rigorous definition is that $T_{\sigma,b}$ intersects $Q$ but does not intersect the top and bottom segments in $\partial Q$. In particular, 
\begin{equation}\label{form16} b \in I_{Q} \quad \Longrightarrow \quad |T_{\sigma,b} \cap Q|_{\delta} \sim \underline{\Delta}/\delta, \end{equation} 
where (as before) $T_{\sigma,b} = [\ell_{\sigma,b}]_{\delta/2}$.

Now, we apply the "density property" of $\mathcal{P}_{\sigma}$ stated in Lemma \ref{lemma1}\nref{P1} to $I_{Q}$:
\begin{displaymath} |Y_{\sigma} \cap I_{Q}| \sim \underline{\Delta} \delta^{-1/2}, \qquad \sigma \in [0,1]. \end{displaymath} Now all the tubes $T = T_{b,\sigma} \in \mathcal{T}_{\sigma}$ with $b \in Y_{\sigma} \cap I_{Q}$ intersect $Q$ significantly. However, this is not enough to say anything about $Q \cap \mathfrak{R}_{\sigma}$, because $\mathfrak{R}_{\sigma} = \mathcal{D}_{\delta}(\cup \mathcal{R}_{\sigma})$ is defined as the union of the $(\Delta \times \delta)$-rectangles $R_{T} \subset T$, not the full tubes. To get around this, we have to recall from \eqref{def:rectangles} how the rectangles $R_{T} \subset T$ were selected.
\begin{figure}[h!]
\begin{center}
\begin{overpic}[scale = 0.5]{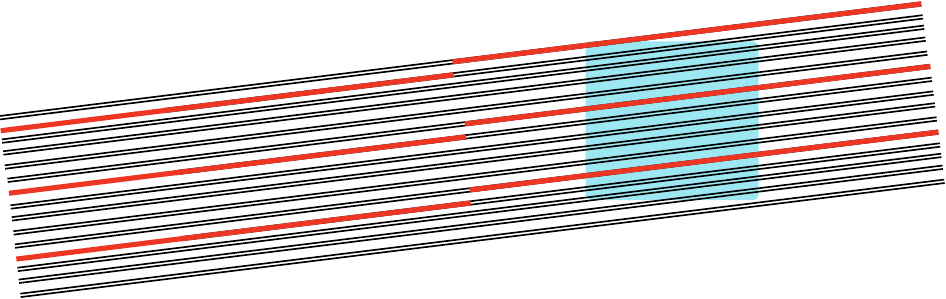}
\end{overpic}
\caption{The square $Q \in \mathcal{D}_{\underline{\Delta}}$ (drawn in blue) intersects many rectangles $R_{T}$, $T \in \mathcal{T}_{\sigma}$ ( drawn in red).}\label{fig4}
\end{center}
\end{figure}

Recall the (increasing) enumeration $Y_{\sigma} = \{b_{1},\ldots,b_{n}\}$. Fix $b_{j} \in Y_{\sigma} \cap I_{Q}$ (so "$T_{\sigma,b}$ enters $Q$ from the left and exits from the right"), and let $R_{\sigma,b_{j}} \subset T_{\sigma,b_{j}}$ be the (unique) element of $\mathcal{R}_{\sigma}$ contained in $T_{\sigma,b_{j}}$. Recall from \eqref{def:rectangles} that 
\begin{displaymath} R_{\sigma,b_{j}} = T_{\sigma,b_{j}} \cap \bar{\pi}_{\sigma}^{-1}(2I_{i(j)}) \end{displaymath}
for the interval $I_{i(j)} \in \{I_{1},\ldots,I_{m}\} := \mathcal{D}_{\Delta/2}([-2,2])$ satisfying $j = i(j) (\mathrm{mod\, } m)$. 

Next note that the (hypothetical) inclusion $\bar{\pi}_{\sigma}(Q) \subset 2I_{i(j)}$ implies $R_{\sigma,b_{j}} \cap Q = T_{\sigma,b_{j}} \cap Q$, and therefore $R_{\sigma,b_{j}}$ intersects $Q$ significantly -- more precisely $|R_{\sigma,b_{j}} \cap Q|_{\delta} \sim \underline{\Delta}/\delta$. On the other hand, since $\ell(Q) = \underline{\Delta} \leq \tfrac{1}{100}\Delta$, there exists at least one index $i_{Q} \in \{1,\ldots,m\}$ such that $\bar{\pi}_{\sigma}(Q) \subset 2I_{i_{Q}}$. Summarising these two observations, we conclude the following: whenever $b_{j} \in Y_{\sigma} \cap I_{Q}$ is such that $i(j) = i_{Q}$, then $|R_{\sigma,b_{j}} \cap Q|_{\delta} \sim \underline{\Delta}/\delta$.

Are there (m)any elements $b_{j} \in Y_{\sigma} \cap I_{Q}$ such that $i(j) = i_{Q}$? By the definition of "$i(j)$", this requirement is equivalent to $j = i_{Q} (\mathrm{mod\,} m)$. Finally, note that $Y_{\sigma} \cap I_{Q} = \{b_{k},\ldots,b_{l}\}$ for some $k,l \in \{1,\ldots,n\}$ with $l - k \sim |Y_{\sigma} \cap I_{Q}| \sim \underline{\Delta}\delta^{-1/2}$. Evidently,
\begin{displaymath} |\{b_{j} \in Y_{\sigma} \cap I_{Q} : j = i_{Q}(\mathrm{mod\,} m)\}| \sim (l - k)/m \sim \Delta\underline{\Delta} \delta^{-1/2} \stackrel{\eqref{form9}}{\geq} \underline{\Delta}^{1 + \epsilon/2}\delta^{-1/2}. \end{displaymath}
Therefore,
\begin{displaymath} |\{b_{j} \in Y_{\sigma} \cap I_{Q} : |R_{\sigma,b_{j}} \cap Q|_{\delta} \sim \underline{\Delta}/\delta\}| \gtrsim \underline{\Delta}^{1 + \epsilon/2}\delta^{-1/2}, \end{displaymath}
and finally
\begin{equation}\label{form25} |\mathfrak{R}_{\sigma} \cap Q| \gtrsim |\{b_{j} \in Y_{\sigma} \cap I_{Q} : |R_{\sigma,b_{j}} \cap Q|_{\delta} \sim \underline{\Delta}/\delta\}| \cdot (\underline{\Delta}/\delta) \gtrsim \underline{\Delta}^{2 + \epsilon/2}\delta^{-3/2}. \end{equation} 
The upper bound \eqref{form11} and the lower bound above are the same as in \eqref{form8}-\eqref{form12}, except that $s = (1 - \epsilon)/2$ has been replaced by $1/2$, and the lower bound has the additional (small) factor $\underline{\Delta}^{\epsilon/2}$. Now virtually the same argument as in the proof of \eqref{form4} yields \eqref{form10}. We repeat the details for completeness. First, assume that at least half of the squares in $\mathfrak{R}_{\sigma} \cap Q$ are contained in such squares $W \in \mathcal{W}$ with $\delta^{1/2} \leq \ell(W) \leq \underline{\Delta}$. Then,
\begin{align*} \mathop{\sum_{W \in \mathcal{W}}}_{W \subset Q} \ell(W)^{t - \epsilon} & \geq \underline{\Delta}^{t - \epsilon - 2}\delta^{3/2} \mathop{\sum_{W \in \mathcal{W}}}_{W \subset Q, \delta^{1/2} \leq \ell(W) \leq \underline{\Delta}} \ell(W)^{2}\delta^{-3/2} \stackrel{\eqref{form11} + \eqref{form25}}{\gtrsim} \underline{\Delta}^{t - \epsilon/2}. \end{align*} 
This yields \eqref{form10}, using \eqref{form9}, and recalling $\ell(Q) = \underline{\Delta}$.

Finally, assume that at least half of the squares in $\mathfrak{R}_{\sigma} \cap Q$ are contained in squares $W \in \mathcal{W}$ with $\delta \leq \ell(W) \leq \delta^{1/2}$. Then, recalling that $t \in [1 + \epsilon,\tfrac{3}{2}]$, 
\begin{align*} \mathop{\sum_{W \in \mathcal{W}}}_{W \subset Q} \ell(W)^{t - \epsilon} & \geq \delta^{t - \epsilon} \mathop{\sum_{W \in \mathcal{W}}}_{W \subset Q, \delta \leq \ell(W) \leq \delta^{1/2}} \ell(W)\delta^{-1}\\
& \stackrel{\eqref{form11} + \eqref{form25}}{\gtrsim} \underline{\Delta}^{2 + \epsilon/2}\delta^{t - \epsilon - 3/2} \geq \underline{\Delta}^{2 + \epsilon/2}\delta^{-\epsilon/2}. \end{align*}
The right hand side is $\geq C\underline{\Delta}^{t}$ by \eqref{form7}. This completes the proof of \eqref{form10}, and the proof of Proposition \ref{prop1App}. \end{proof}

\bibliographystyle{plain}
\bibliography{references}

\end{document}